\documentclass[11pt,a4paper]{amsart}

\usepackage[margin=31mm]{geometry}
\usepackage{amsmath,amssymb,amsthm,mathtools}
\usepackage{microtype}
\usepackage{tikz}
\usepackage[hidelinks]{hyperref}
\hypersetup{
  pdftitle={Sharp order in Erdos's minimum-area problem for polynomial lemniscates},
  pdfauthor={Venkata Pendyala},
  pdfsubject={Polynomial lemniscates and complex approximation},
  pdfkeywords={polynomial lemniscate, Faber polynomial, logarithmic potential, midpoint quadrature}
}

\newtheorem{theorem}{Theorem}[section]
\newtheorem{proposition}[theorem]{Proposition}
\newtheorem{lemma}[theorem]{Lemma}

\theoremstyle{remark}
\newtheorem{remark}[theorem]{Remark}

\newcommand{\D}{\mathbb D}
\newcommand{\T}{\mathbb T}
\newcommand{\C}{\mathbb C}
\newcommand{\Area}{\operatorname{Area}}
\newcommand{\dist}{\operatorname{dist}}
\newcommand{\norm}[1]{\left\lVert #1\right\rVert}
\newcommand{\Pn}{\mathcal P}
\newcommand{\Strip}[1]{\{z\in\C:|\operatorname{Im}z|<#1\}}

\title[Sharp order for polynomial lemniscates]
{Sharp order in Erd\H{o}s's minimum-area problem for polynomial lemniscates}
\author[V. Pendyala]{Venkata Pendyala}
\email{venkatasiddharthpendyala@gmail.com}
\date{12 June 2026}
\subjclass[2020]{Primary 30C10; Secondary 30E10, 31A15, 41A55}
\keywords{polynomial lemniscate, Faber polynomial, logarithmic potential,
midpoint quadrature, normal family}

\begin{document}

\begin{abstract}
For a monic polynomial \(p\), its filled unit lemniscate is the planar set
\(\{z:|p(z)|<1\}\).  Let \(\kappa_n(K,1)\) be the least possible area of
this set among monic polynomials of degree \(n\) whose zeros lie in a compact
set \(K\).  We prove that there are absolute constants \(c,C>0\) such that
\[
 \frac{c}{\log n}
 \leq \kappa_n(\overline{\D},1)
 \leq \kappa_n(\T,1)
 \leq \frac{C}{\log n}.
\]
Thus the recently established lower bound has the correct order, even when
all zeros are required to lie on the unit circle.  The upper bound is obtained
by combining a quantitative Faber-polynomial separator for a thin keyhole
domain with an equal-weight midpoint discretization that preserves the degree
exactly.  We also deduce that the critical boundary-zero minimizers form a
normal family in \(\D\).
\end{abstract}

\maketitle

\section{Introduction}\label{sec:introduction}

For a monic polynomial \(p\), the set
\[
 \Lambda_p(t)=\{z\in\C:|p(z)|<t\}
\]
is called a filled polynomial lemniscate.  Its geometry reflects both the
location of the zeros of \(p\) and the growth of \(p\) away from those zeros.
Without a restriction on the zeros, the area of \(\Lambda_p(1)\) can be made
arbitrarily small by separating them.  The natural normalized problem is
therefore to prescribe a compact set containing every zero and then minimize
the area.

For a compact set \(K\subset\C\), let \(\Pn_n(K)\) denote the monic
polynomials of degree \(n\) whose zeros lie in \(K\), and define
\[
 \kappa_n(K,t)=\inf_{p\in\Pn_n(K)}\Area\Lambda_p(t).
\]
The case \(K=\overline\D\) and \(t=1\) originates in work of Erd\H{o}s and
was recorded in the systematic study of Erd\H{o}s, Herzog, and Piranian
\cite{EHP}.  Earlier estimates were obtained by Pommerenke \cite{Pommerenke}
and Wagner \cite{Wagner}.  Krishnapur, Lundberg, and Ramachandran recently
proved
\[
 \frac{c}{\log n}
 \leq \kappa_n(\overline\D,1)
 \leq \frac{C}{\log\log n},
\]
and identified the behavior of critical boundary-zero minimizers as the
remaining obstruction in their method \cite{KLR}.  The logarithmic gap
between these two bounds is the point addressed here.

Our main result determines the sharp order and does so under the stronger
constraint that every zero lies on the unit circle.

\begin{theorem}[Sharp order]\label{thm:sharp}
There are absolute constants \(c,C>0\) such that, for every \(n\geq3\),
\begin{equation}\label{eq:sharp-chain}
 \frac{c}{\log n}
 \leq \kappa_n(\overline{\D},1)
 \leq \kappa_n(\T,1)
 \leq \frac{C}{\log n}.
\end{equation}
Moreover, if \(p_n\) minimizes \(\Area\Lambda_p(1)\) over
\(\Pn_n(\T)\), then for every \(0<r<1\) there exists \(n_r\) such that
\begin{equation}\label{eq:minimizer-bound}
 \sup_{|z|\leq r}|p_n(z)|\leq e
 \qquad(n\geq n_r).
\end{equation}
Consequently every sequence of boundary-zero minimizers is locally bounded,
and hence normal, in \(\D\).
\end{theorem}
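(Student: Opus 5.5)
The lower bound \(c/\log n\le\kappa_n(\overline\D,1)\) is the theorem of Krishnapur, Lundberg and Ramachandran \cite{KLR}, and the middle inequality holds because \(\Pn_n(\T)\subset\Pn_n(\overline\D)\). So the work lies in the upper bound \(\kappa_n(\T,1)\le C/\log n\) and in the minimizer estimate \eqref{eq:minimizer-bound}.

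For the upper bound I would follow the route indicated in the abstract and construct an explicit \(p\in\Pn_n(\T)\) with \(\Area\Lambda_p(1)\lesssim 1/\log n\). With zeros \(\zeta_1,\dots,\zeta_n\in\T\), the function
\[
\frac1n\log|p(z)|=\frac1n\sum_j\log|z-\zeta_j|
\]
is a discretization of the logarithmic potential of a probability measure \(\mu\) on \(\T\).

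\textbf{Step 1 (continuous separator).} Choose \(\delta\asymp 1/\log n\) and a thin keyhole domain \(G_\delta\): the disc with a narrow slit-like channel of width \(\sim\delta\) removed, so that its complement has small area. Construct \(\mu\), or directly a function \(U=\int\log|z-\zeta|\,d\mu(\zeta)\), satisfying \(U\ge \eta>0\) outside a set of area \(O(\delta)\). Here \(\eta\) should be of size comparable to \((\log n)/n\), with room to spare after discretization. I would build \(U\) as the real part of a truncated Faber expansion adapted to the keyhole, so that it is quantitatively positive off a small exceptional set. The measure \(\mu\) has to be supported on \(\T\), positive, and of total mass \(1\). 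Presumably most of the mass concentrates near the keyhole's mouth, so that \(U\) is nearly \(\log|z|\) plus a correction that lifts it above \(0\) inside the disc except near a thin region.

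\textbf{Step 2 (equal-weight midpoint discretization).} Split \(\T\) into \(n\) arcs of equal \(\mu\)-mass and place \(\zeta_j\) at the \(\mu\)-midpoint of each arc. This keeps the degree exactly \(n\). For \(z\) at distance \(\gtrsim\delta\) from \(\T\), the midpoint rule gives error \(O\bigl(1/(n^2\dist(z,\operatorname{supp}\mu)^2)\bigr)\) per unit mass, which is harmless. Close to \(\T\), the region where \(|p|<1\) is a union of small neighbourhoods of the zeros plus a thin annular layer. A separate local estimate shows that its area is \(O(\delta)\), using that \(n\log|z-\zeta_j|\) dominates only in discs of radius \(\sim e^{-cn\eta}\).

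\textbf{Step 3 (area accounting).} Sum the exceptional sets to obtain \(\Area\Lambda_p(1)\le C\delta\le C/\log n\).

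\textbf{Step 4 (minimizer bound).} For the minimizer \(p_n\), \(\Area\Lambda_{p_n}(1)\le C/\log n\). Since \(\log|p_n|\) is subharmonic, for \(|z|\le r\) we apply the sub-mean value property on a disc \(B(z,\rho)\) with \(\rho=(1-r)/2\). Write \(\log|p_n|=\log^+|p_n|-\log^-|p_n|\). The average of \(\log^+|p_n|\) over the disc is controlled by the area of \(\{|p_n|\ge1\}\cap B\), which is nearly all of \(B\), together with the bound \(\log|p_n|\le n\log 2\); that gives too much. So the argument should go the other way. Because the zeros lie on \(\T\), \(\log|p_n|\) is harmonic in \(\D\). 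I would then use Harnack and Poisson representations with a comparison: if \(\log|p_n(z_0)|>1\) somewhere in \(|z|\le r\), minimality lets us replace \(p_n\) by a competitor with strictly smaller area, for instance via a perturbation of the zeros or via Step 1 with a rotated keyhole. Alternatively, harmonicity plus smallness of \(\{\log|p_n|<0\}\) forces \(\log|p_n|\) to be close to a positive harmonic function whose size is governed by the critical-point (variational) equations of the minimizer.

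The main obstacle is Step 1, the quantitative Faber separator, namely keeping \(U\ge\eta\) uniformly with exceptional area \(O(1/\log n)\). The second hardest point is Step 4, where the variational condition of a critical boundary-zero minimizer must be exploited. For that I would first try differentiating the area with respect to moving a single zero along \(\T\) and reading off a Poisson-integral identity that bounds \(\log|p_n|\) by \(1\) on compacta for large \(n\).
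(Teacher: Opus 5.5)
\textbf{Upper bound.} The first two inequalities are handled correctly, and your plan for the upper bound (a Faber separator on a keyhole of width $\delta\asymp1/\log n$, then equal-mass midpoint nodes) is the paper's. However, both quantitative links are missing, and Step~2 as written fails.

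First, you never say how a Faber-type function becomes the potential of a \emph{positive} measure on $\T$. Your guess that the mass concentrates near the mouth with $U\approx\log|z|$ is also off: inside $\D$ every such potential is harmonic and vanishes at $0$, which is why the corridor must reach the origin. The paper instead perturbs Haar measure, whose potential is $\equiv0$ in $\D$, by a relative amount $O(1/n)$. For $P(z)=\sum_{k=1}^Na_kz^k$, the mean-zero density $\sigma=-2\operatorname{Re}\sum_k ka_ke^{ik\theta}$ has potential $\operatorname{Re}P$ in $\D$. So $\rho=1+\alpha\sigma$ is positive once $\alpha\norm{\sigma}_\infty$ is small, with potential $\alpha\operatorname{Re}P$ inside and $\log|z|+\alpha\operatorname{Re}P(1/\overline z)$ outside. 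Here $P=2(1-F_m/F_m(0))$ for a single Faber polynomial $F_m$, which is bounded on $K_\delta$ by Gaier's bounded-turning theorem; $m\le e^{C/\delta}$ comes from the Harnack-chain bound $g(0)\ge e^{-C/\delta}$.

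Second, the resulting margin is $\alpha\approx e^{-CN}/n$, not $(\log n)/n$, and no construction can do much better. If $U_\mu\ge\eta$ off a set of area $O(\delta)$, then $h(z)=U_\mu((1-3\delta)z)$ vanishes at $0$, satisfies $|h|\le\log(1/3\delta)$, and is negative only on area $O(\delta)$. The Nazarov--Polterovich--Sodin estimate then forces $\log\bigl(\max_{|z|\le1}|h|/\max_{|z|\le1/2}|h|\bigr)\ge e^{c/\delta}$, so $\eta\le\log(1/3\delta)\exp(-n^{c/A})$ when $\delta=A/\log n$. An $O(n^{-2}\delta^{-2})$ midpoint error is therefore useless.

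What is needed is exponential accuracy. For such tiny $\alpha$, the quantile map $F^{-1}$ extends holomorphically to a strip of width $\asymp1/N$ by a contraction argument. The midpoint sum then has only aliasing error $\lesssim\log(2/\delta)\,e^{-cn/N}$ where $\dist(z,\T)\ge\delta$. One must also take $A$ large, so that $N\le e^{C_0/\delta}=n^{C_0/A}\le n^{1/4}$ and hence $e^{-cn^{3/4}}\ll\alpha$. No local analysis near $\T$ is then needed, since the annulus $1-6\delta<|z|<1+\delta$ has area $O(\delta)$ and is simply discarded.

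\textbf{Minimizer bound.} Step~4 contains no argument: the sub-mean-value attempt fails, as you observe, and the variational route is only a plan. The paper uses no criticality at all, only minimality through the new upper bound together with harmonic sign-asymmetry. Because the zeros lie on $\T$, $u_n=\log|p_n|$ is harmonic in $\D$ with $u_n(0)=\log|p_n(0)|=0$. For $r<r_2<1$ one trivially has $|u_n|\le C_{r_2}n$ on $r_2\D$. If $\sup_{|z|\le r}|p_n|>e$, then $\sup_{r\D}|u_n|\ge1$, so $h_n(z)=u_n(r_2z)$ satisfies $\log\bigl(\max_{|z|\le1}|h_n|/\max_{|z|\le r/r_2}|h_n|\bigr)\le\log(C_{r_2}n)$. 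The Nazarov--Polterovich--Sodin theorem, moved from inner radius $1/2$ to $r/r_2$ via Borel--Carath\'eodory and Hadamard's three circles, then gives $\Area\{u_n<0\}\ge c_r/\log\log n$. This contradicts $\Area\Lambda_{p_n}(1)=\kappa_n(\T,1)\le C/\log n$ for large $n$. Local boundedness, and normality by Montel's theorem, follow. This is the same asymmetry phenomenon that rules out a polynomially large margin in the upper-bound construction.
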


The first inequality in \eqref{eq:sharp-chain} is due to
Krishnapur--Lundberg--Ramachandran, and the middle inequality follows from
\(\Pn_n(\T)\subset\Pn_n(\overline\D)\).  The new assertion is the upper
bound for \(\kappa_n(\T,1)\).  In particular, the result is not an
improvement of a constant: it replaces the previously known order
\(1/\log\log n\) by the matching order \(1/\log n\).

We now describe the proof.  Let \(\delta>0\) be small.  First, we round a
disk cut open by a corridor of width comparable to \(\delta\) to obtain an
analytic keyhole \(K_\delta\) whose boundary has uniformly bounded total
turning.  A Harnack chain through the corridor gives a lower bound
\(\exp(-C/\delta)\) for the exterior Green function at the origin.  Gaier's
bounded-rotation theorem for Faber polynomials then produces a polynomial
\(P_\delta\), of degree at most \(\exp(C/\delta)\), such that
\(P_\delta(0)=0\) and \(\operatorname{Re}P_\delta\geq1\) on
\(K_\delta\).

Second, the Fourier expansion of the logarithmic kernel represents
\(\operatorname{Re}P_\delta\) as the logarithmic potential of a signed
trigonometric density on \(\T\).  We add a sufficiently small multiple of
that density to Haar measure and replace the resulting measure by exactly
\(n\) equal atoms at its midpoint quantiles.  The inverse distribution
function is analytic in a strip, so the midpoint rule has an error
exponentially small in \(n/N\), where \(N\) is the bandwidth.  The product
with zeros at those quantiles is therefore a monic polynomial of degree
exactly \(n\), with every zero on \(\T\), whose logarithmic modulus retains
the sign pattern of the continuous potential.

Finally, we take \(\delta=A/\log n\) and choose \(A\) so that
\(N\leq n^{1/4}\).  The resulting polynomial is larger than one on the
keyhole and outside a thin annulus about \(\T\).  Its unit sublevel set is
therefore confined to the corridor and that annulus, whose total area is
\(O(\delta)=O(1/\log n)\).

The normality statement is then a consequence rather than an input.  If a
boundary-zero minimizer had fixed interior growth, the harmonic asymmetry
theorem of Nazarov, Polterovich, and Sodin \cite{NPS} would force its negative
set to have area of order at least \(1/\log\log n\).  This contradicts the
new upper bound \(O(1/\log n)\).

For completeness, minimizers in \(\Pn_n(\T)\) exist.  Unordered zero
configurations form a compact quotient of \(\T^n\), and convergence of the
zeros gives local uniform convergence of the corresponding monic
polynomials.  Moreover, \(\Lambda_p(1)\subset2\D\), since
\[
 |p(z)|\geq(|z|-1)^n\geq1
 \qquad(|z|\geq2),
\]
and the real-algebraic level set \(\{|p|=1\}\) has planar measure zero.
Dominated convergence therefore makes the area functional continuous.

The paper is organized as follows.  Section~\ref{sec:keyhole} constructs the
keyhole and its Faber separator.  Section~\ref{sec:quantization} proves the
analytic midpoint discretization.  Section~\ref{sec:competitor} constructs
the boundary-supported competitor and proves the sharp order.  The normality
assertion is established in Section~\ref{sec:normality}.

\section{The keyhole and its Faber separator}\label{sec:keyhole}

Fix \(0<\delta<10^{-2}\), put \(\Gamma_0=[0,1]\), and define
\begin{align}
 K_\delta^-
 &=
 \left\{
 z\in\C:
 |z|\leq1-6\delta,\quad
 \dist(z,\Gamma_0)\geq2\delta
 \right\},                                           \label{eq:Kminus}\\
 K_\delta^+
 &=
 \left\{
 z\in\C:
 |z|\leq1-3\delta,\quad
 \dist(z,\Gamma_0)\geq\frac{\delta}{2}
 \right\}.                                           \label{eq:Kplus}
\end{align}
The following lemma gives the analytic keyhole, the precise room left on
both sides of its boundary, and the Harnack chain that later controls the
exterior Green function.

\begin{lemma}[Analytic keyhole and uniform Harnack chain]
\label{lem:keyhole-geometry}
There exist absolute constants \(\delta_0>0\) and \(V_0<\infty\) such that,
for every \(0<\delta<\delta_0\), there is a compact set \(K_\delta\) with the
following properties.

\begin{enumerate}
\item
\(K_\delta\) is the closure of a bounded Jordan domain, and
\(C_\delta=\partial K_\delta\) is the image of a regular real-analytic
embedding of the circle.

\item
The comparison inclusions possess the explicit margins
\begin{align}
 \dist\bigl(K_\delta^-,\C\setminus K_\delta\bigr)
 &\geq\frac{\delta}{4},                              \label{eq:inner-margin}\\
 \dist\bigl(K_\delta,\C\setminus K_\delta^+\bigr)
 &\geq\frac{\delta}{4}.                              \label{eq:outer-margin}
\end{align}
In particular,
\[
 K_\delta^-\subset\operatorname{int}K_\delta
 \subset K_\delta
 \subset\operatorname{int}K_\delta^+.
\]

\item
Let \(s\mapsto\gamma_\delta(s)\), \(0\leq s\leq L_\delta\), be the
positively oriented arclength parametrization of \(C_\delta\), and let
\(\vartheta_\delta\) be a continuous lift of its tangent angle, so that
\[
 \gamma_\delta'(s)=e^{i\vartheta_\delta(s)},
 \qquad
 \vartheta_\delta(L_\delta)=\vartheta_\delta(0)+2\pi.
\]
Then
\begin{equation}
 \operatorname{Var}_{[0,L_\delta]}\vartheta_\delta
 \leq V_0.                                           \label{eq:uniform-turning}
\end{equation}
One may take \(V_0=20\pi\).

\item
If
\[
 \Omega_\delta=\widehat\C\setminus K_\delta,
\]
then
\begin{equation}
 \mathcal U_\delta
 :=
 \left\{z:\dist(z,\Gamma_0)<\frac{\delta}{2}\right\}
 \cup
 \left\{z:|z|>1-3\delta\right\}
 \subset\Omega_\delta.                               \label{eq:explicit-corridor}
\end{equation}
Moreover, if
\[
 L=\left\lceil\frac{33}{\delta}\right\rceil,
 \qquad
 x_j=\frac{2j}{L},
 \qquad
 B_j=D\left(x_j,\frac{\delta}{16}\right),
 \qquad 0\leq j\leq L,
\]
then
\begin{equation}
 2B_j\subset\Omega_\delta,
 \qquad
 x_{j+1}\in B_j
 \quad(0\leq j<L).                                  \label{eq:explicit-chain}
\end{equation}
Consequently every positive harmonic function \(v\) on \(\Omega_\delta\)
satisfies
\begin{equation}
 v(0)
 \geq3^{-L}v(2)
 \geq
 3^{-1}\exp\left(-\frac{33\log3}{\delta}\right)v(2).
 \label{eq:explicit-harnack}
\end{equation}
\end{enumerate}
\end{lemma}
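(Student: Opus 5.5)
Construct \(K_\delta\) explicitly as a smoothed sublevel set sandwiched between \(K_\delta^-\) and \(K_\delta^+\). Consider the intermediate set
\[
 M_\delta=\Bigl\{z:|z|\leq1-\tfrac{9}{2}\delta,\ \dist(z,\Gamma_0)\geq\delta\Bigr\},
\]
a disk with a slit corridor of half-width \(\delta\) removed along \([0,1]\). Its boundary consists of an arc of the circle of radius \(1-\tfrac92\delta\), two segments parallel to \([0,1]\) at height \(\pm\delta\), and a semicircular cap of radius \(\delta\) around \(0\). This is a piecewise-analytic Jordan curve with two convex corners, where the segments meet the big circle. Round these two corners by inserting circular arcs of radius \(\delta/8\) tangent to both pieces. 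This gives a \(C^{1,1}\) curve with piecewise constant curvature. The margins are elementary: every point of the rounded curve lies within \(\delta/8\) of \(\partial M_\delta\), and \(\partial M_\delta\) is at distance \(\geq\tfrac32\delta\cdot\) (radial) and \(\geq\delta\) (corridor) from \(K_\delta^-\), and at distance \(\geq\tfrac32\delta\) and \(\geq\delta/2\) from \(\partial K_\delta^+\). This leaves margins at least \(\delta/2-\delta/8\geq\delta/4\) after rounding, with room to spare.

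To achieve real-analyticity, mollify the curvature. Parametrize the rounded curve by arclength, convolve its tangent angle \(\vartheta\) with a Gaussian (heat kernel) of width \(\ll\delta\), re-adjust by a tiny affine correction so the curve closes, and integrate. The result is a real-analytic regular closed curve, \(C^0\)-close to the rounded one (error \(\ll\delta/100\)), hence still embedded and with the margins \eqref{eq:inner-margin}--\eqref{eq:outer-margin}. Convolution does not increase total variation. The variation of the unmollified \(\vartheta\) is the total absolute curvature: \(2\pi\) net, plus the cap and corner turns, totalling at most \(2\pi+2\pi+2\pi\) or so, comfortably below \(20\pi\). The closing correction adds \(o(1)\). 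This gives (1)--(3). I expect the main obstacle to be the bookkeeping here, namely keeping embeddedness and closure under mollification uniformly in \(\delta\). Scaling by \(\delta\) near the corners helps, since the local picture is then \(\delta\)-independent.

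For (4), \eqref{eq:explicit-corridor} is immediate from \eqref{eq:outer-margin}: any point at distance \(<\delta/2\) from \(\Gamma_0\), or of modulus \(>1-3\delta\), lies outside \(K_\delta^+\) and so outside \(K_\delta\). For the chain, \(x_j\in[0,2]\) and \(2B_j=D(x_j,\delta/8)\). If \(x_j\leq1\), then \(2B_j\) lies within \(\delta/8<\delta/2\) of \(\Gamma_0\). If \(x_j>1\), every point has modulus \(>1-\delta/8>1-3\delta\). Either way \(2B_j\subset\mathcal U_\delta\subset\Omega_\delta\). Also \(|x_{j+1}-x_j|=2/L\leq2\delta/33<\delta/16\), so \(x_{j+1}\in B_j\).

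Harnack on \(D(x_j,\delta/8)\) for a positive harmonic \(v\) gives, for \(|z-x_j|\leq\rho=\delta/16\) with \(R=\delta/8\),
\[
 v(z)\ \geq\ \frac{R-\rho}{R+\rho}\,v(x_j)=\tfrac13v(x_j).
\]
The reverse inequality \(v(x_{j+1})\leq3v(x_j)\) holds likewise. Chaining from \(x_L=2\) down to \(x_0=0\) gives \(v(0)\geq3^{-L}v(2)\). Finally \(L\leq33/\delta+1\) yields \(3^{-L}\geq3^{-1}e^{-33\log3/\delta}\), which is \eqref{eq:explicit-harnack}.
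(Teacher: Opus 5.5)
Your proposal is correct and follows essentially the paper's route: an intermediate keyhole with Lipschitz margins, rounding of its corners, heat-kernel mollification to reach real-analyticity, and the same Harnack chain along \([0,2]\). Your two-case check of \(2B_j\subset\mathcal U_\delta\) is slightly simpler than the paper's three cases.

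The one real difference is that you mollify the tangent angle, so the turning bound comes from convolution not increasing variation, rather than from the paper's \(C^2\)-continuity of total turning. Embeddedness and the preserved margins then rest on the \(C^1\)-closeness your construction actually delivers (uniformly close tangent angles, followed by a winding-number comparison). You should say so explicitly, because \(C^0\)-closeness alone, which is what you state, does not rule out self-intersections.
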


\begin{figure}[t]
\centering
\begin{tikzpicture}[scale=3.3,line cap=round,line join=round]

\draw[densely dotted,line width=.65pt]
 ({1.00*cos(8)},{1.00*sin(8)})
 arc[start angle=8,end angle=352,radius=1.00]
 -- (0,-.11)
 arc[start angle=-90,end angle=90,radius=.11]
 -- cycle;

\path[fill=black!10,draw=black,line width=.9pt]
 ({.93*cos(11)},{.93*sin(11)})
 arc[start angle=11,end angle=349,radius=.93]
 -- (0,-.16)
 arc[start angle=-90,end angle=90,radius=.16]
 -- cycle;

\draw[dashed,line width=.65pt]
 ({.86*cos(15)},{.86*sin(15)})
 arc[start angle=15,end angle=345,radius=.86]
 -- (0,-.22)
 arc[start angle=-90,end angle=90,radius=.22]
 -- cycle;

\foreach \x in
 {0,.11,.22,.33,.44,.55,.66,.77,.88,.99,1.10,1.21,1.32,1.43,1.54}
 {
   \draw[black!55,line width=.45pt] (\x,0) circle (.026);
 }

\end{tikzpicture}
\caption{The solid shaded region is the analytic keyhole \(K_\delta\); the
dashed and dotted curves indicate the inner and outer comparison regions.
The small circles schematically represent the fixed-overlap Harnack chain
contained in the corridor and the exterior annulus.}
\label{fig:keyhole}
\end{figure}

\begin{proof}
Set
\[
 R_0=1-\frac92\delta,
 \qquad
 a_0=\frac54\delta,
\]
and begin with the intermediate piecewise analytic keyhole
\[
 K_\delta^0
 =
 \left\{z:|z|\leq R_0,\quad\dist(z,\Gamma_0)\geq a_0\right\}.
\]
Its boundary consists of an arc of \(|z|=R_0\), two straight segments
parallel to \(\Gamma_0\), an arc of \(|z|=a_0\), and the finitely many
junctions at which these pieces meet; it is therefore a Jordan curve, and
\(K_\delta^0\) is the closure of its bounded complementary component.

The functions \(z\mapsto|z|\) and
\(z\mapsto\dist(z,\Gamma_0)\) are \(1\)-Lipschitz.  A point of
\(K_\delta^-\) must consequently move either by at least
\(3\delta/2\) in modulus, or by at least \(3\delta/4\) in distance from
\(\Gamma_0\), before leaving \(K_\delta^0\).  Hence
\begin{equation}
 \dist\bigl(K_\delta^-,\C\setminus K_\delta^0\bigr)
 \geq\frac{3\delta}{4}.                              \label{eq:raw-inner-margin}
\end{equation}
The same argument, now comparing \(K_\delta^0\) with \(K_\delta^+\), gives
\begin{equation}
 \dist\bigl(K_\delta^0,\C\setminus K_\delta^+\bigr)
 \geq\frac{3\delta}{4}.                              \label{eq:raw-outer-margin}
\end{equation}

Inside pairwise disjoint disks of radius \(\delta/32\) about the junctions,
replace each meeting pair of analytic arcs by a regular smooth interpolation
whose tangent angle varies monotonically between its endpoint directions;
outside those disks the curve is left unchanged.  The resulting regular
\(C^\infty\) Jordan curve \(C_\delta^{(1)}\) lies at Hausdorff distance at
most \(\delta/16\) from \(\partial K_\delta^0\).  Since only a fixed number
of junctions are altered, and each interpolation contributes no more than
the angle between its endpoint tangents, the total absolute turning is at
most \(12\pi\), independently of \(\delta\).

Let
\[
 \gamma_\delta^{(1)}:\mathbb R/2\pi\mathbb Z\longrightarrow\C
\]
be a regular \(C^\infty\) parametrization of \(C_\delta^{(1)}\), and let
\(H_t\) denote the periodic heat kernel.  The convolution
\[
 \gamma_{\delta,t}=H_t*\gamma_\delta^{(1)}
\]
is real analytic in the real parameter and converges to
\(\gamma_\delta^{(1)}\) in \(C^2\) as \(t\downarrow0\).  A regular
\(C^2\) Jordan embedding possesses a tubular neighborhood, and every
sufficiently small \(C^1\) perturbation is again a regular Jordan embedding.
Within that tubular neighborhood the nearest-point projection and the normal
segments give an isotopy from the perturbed curve to the original one; hence,
when the perturbation is smaller than the separation margins, its bounded
component still contains \(K_\delta^-\) and remains contained in
\(K_\delta^+\).  We may therefore choose \(t=t(\delta)\) so small that
\[
 \norm{\gamma_{\delta,t}-\gamma_\delta^{(1)}}_{C^2}
 <\varepsilon_\delta,
 \qquad
 \varepsilon_\delta<\frac{\delta}{16},
\]
and so that the bounded component \(K_\delta\) enclosed by
\(\gamma_{\delta,t}\) satisfies \eqref{eq:inner-margin} and
\eqref{eq:outer-margin}; these follow from
\eqref{eq:raw-inner-margin} and \eqref{eq:raw-outer-margin}, since the two
successive perturbations have total size less than \(\delta/8\).

For a regular \(C^2\) parametrization \(\gamma\), the total absolute turning
is
\[
 \int
 \frac{\left|
 \operatorname{Im}(\gamma''(t)\overline{\gamma'(t)})
 \right|}{|\gamma'(t)|^2}\,dt.
\]
This expression varies continuously under sufficiently small \(C^2\)
perturbations of a regular curve.  Decreasing \(t(\delta)\), if necessary,
we therefore ensure that the analytically mollified curve has total absolute
turning at most \(20\pi\), which proves \eqref{eq:uniform-turning}.

Since \(K_\delta\subset K_\delta^+\), taking complements gives
\eqref{eq:explicit-corridor}.  Put \(r_\delta=\delta/16\).  The spacing of
the centers is
\[
 |x_{j+1}-x_j|=\frac2L
 \leq\frac{2\delta}{33}
 <\frac{\delta}{16}=r_\delta,
\]
so \(x_{j+1}\in B_j\), with strict inclusion as required because
\(B_j\) is open.  If \(0\leq x_j\leq1\), then every \(z\in2B_j\)
satisfies
\[
 \dist(z,\Gamma_0)\leq|z-x_j|<\frac{\delta}{8}.
\]
If \(1<x_j<1+3\delta/8\), then
\[
 \dist(z,\Gamma_0)
 \leq|z-1|
 \leq|z-x_j|+|x_j-1|
 <\frac{\delta}{2}.
\]
Finally, if \(x_j\geq1+3\delta/8\), then for \(z\in2B_j\),
\[
 |z|\geq\operatorname{Re}z
 >x_j-\frac{\delta}{8}
 \geq1+\frac{\delta}{4}
 >1-3\delta.
\]
Thus \(2B_j\subset\mathcal U_\delta\subset\Omega_\delta\) in every case.

The Harnack inequality in the disk \(2B_j\), comparing its center with a
point of \(B_j\), gives
\[
 v(x_j)\geq\frac13v(x_{j+1}).
\]
Iteration from \(x_0=0\) to \(x_L=2\) proves
\eqref{eq:explicit-harnack}.
\end{proof}

We next state the Faber theorem with all hypotheses that enter its use.  If
\(G\subset\C\) is a bounded Jordan domain, \(K=\overline G\),
\(\Omega=\widehat\C\setminus K\), and
\[
 \Phi:\Omega\longrightarrow\{w\in\widehat\C:|w|>1\}
\]
is normalized by \(\Phi(\infty)=\infty\) and \(\Phi'(\infty)>0\), then the
\(m\)-th Faber polynomial \(F_m\) is the polynomial part of the Laurent
expansion of \(\Phi^m\) at infinity.

For clarity, the usual contour representation is first taken on an exterior
level curve.  If \(\Psi=\Phi^{-1}\), \(R>1\), and
\(\Gamma_R=\Psi(\{|w|=R\})\), then, for \(z\) in the bounded component of
\(\widehat\C\setminus\Gamma_R\),
\begin{equation}
 F_m(z)
 =
 \frac{1}{2\pi i}
 \int_{|w|=R}
 \frac{w^m\Psi'(w)}{\Psi(w)-z}\,dw.                 \label{eq:faber-level-contour}
\end{equation}
Thus neither \(\Phi\) nor \(\Psi\) is ever evaluated outside its domain of
definition; estimates on \(K\) are obtained only after the exterior contour
has been used and the boundary has been approached.

\begin{theorem}[Gaier's bounded-rotation Faber theorem]
\label{thm:gaier}
Let \(G\subset\C\) be a bounded Jordan domain with nonempty interior, put
\(K=\overline G\), and suppose that \(C=\partial G\) is rectifiable.  Let
\(\gamma:[0,L]\to C\) be its positively oriented arclength parametrization.
Assume that the almost-everywhere tangent direction admits a real-valued
lift \(\vartheta\) of bounded variation such that
\[
 \gamma'(s)=e^{i\vartheta(s)}
 \quad\text{for almost every }s,
\]
and put
\[
 V(C)=\operatorname{Var}_{[0,L]}\vartheta<\infty.
\]
Let \(F_m\) be the Faber polynomials associated with the normalized exterior
map \(\Phi\).  For a polynomial
\[
 P(w)=\sum_{m=0}^d b_mw^m,
\]
define
\[
 T_KP(z)=\sum_{m=0}^d b_mF_m(z).
\]
Then
\begin{equation}
 \norm{T_KP}_{L^\infty(K)}
 \leq
 \left(1+\frac{2V(C)}{\pi}\right)
 \norm{P}_{L^\infty(\overline\D)}.                  \label{eq:gaier-complete}
\end{equation}
\end{theorem}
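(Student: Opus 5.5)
The plan follows the classical argument of Gaier via the Faber operator's integral representation. Linearity lets us write \(T_KP(z)\) as a contour integral against \(P\). For \(z\) inside \(\Gamma_R\), \eqref{eq:faber-level-contour} summed against the coefficients gives
\[
 T_KP(z)=\frac{1}{2\pi i}\int_{|w|=R}\frac{P(w)\Psi'(w)}{\Psi(w)-z}\,dw .
\]
We then let \(R\downarrow1\). By Carathéodory's theorem \(\Psi\) extends to a homeomorphism of \(|w|=1\) onto \(C\). Since \(C\) is rectifiable, the F. and M. Riesz theorem gives \(\Psi'\in H^1\) and boundary values \(\Psi'(e^{i\tau})ie^{i\tau}\,d\tau=d\zeta\) along \(C\). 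For \(z\in G\) this yields
\[
 T_KP(z)=\frac{1}{2\pi i}\int_C\frac{P(\Phi(\zeta))}{\zeta-z}\,d\zeta .
\]
By the maximum principle it suffices to bound \(|T_KP|\) for \(z\in G\); continuity then carries the bound to \(K\).

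Next we subtract the value \(P(\Phi(\zeta_0))\) in a symmetric way, following Gaier and Kövari–Pommerenke. Write \(g=P\circ\Phi\) on \(C\). For \(z\in G\), Cauchy's formula gives \(\frac{1}{2\pi i}\int_C \frac{d\zeta}{\zeta-z}=1\). Integrating by parts in the arclength variable, we obtain
\[
 T_KP(z)=g(\zeta_*)-\frac{1}{2\pi i}\int_C \log\!\Bigl(\frac{\zeta-z}{\zeta_*-z}\Bigr)\,dg(\zeta)
\]
for a fixed point \(\zeta_*\in C\). This form is awkward because \(dg\) is not controlled. The better route is to keep \(P(w)\) on the circle side: write \(\zeta=\Psi(e^{i\tau})\) and integrate by parts in \(\tau\) against the function
\[
 \tau\mapsto \frac{1}{2\pi}\arg\bigl(\Psi(e^{i\tau})-z\bigr).
\]
The resulting kernel is \(d_\tau \arg(\Psi(e^{i\tau})-z)/(2\pi)\), which has total mass \(1\) on positive parts. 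The \(\log|\cdot|\) part, meanwhile, contributes only through \(P\), and is handled via the Cauchy integral of \(P\) over \(|w|=1\), which vanishes for the non-analytic part.

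Concretely, I would establish
\[
 T_KP(z)=\frac{1}{\pi}\int_0^{2\pi}P(e^{i\tau})\,d_\tau\theta(\tau,z)-P\text{-type correction},
\]
where \(\theta(\tau,z)=\arg(\Psi(e^{i\tau})-z)\). Radon's classical bound then applies: for \(z\) inside a curve of bounded rotation, the variation in \(\tau\) of \(\arg(\Psi(e^{i\tau})-z)\), i.e.\ of the angle under which the arc from \(z\) is seen, is at most \(2\pi+2V(C)\). This comes from comparing the angle seen from \(z\) with the tangent angle: the angle function can decrease only where the tangent turns backward. Subtracting the winding \(2\pi\), which pairs with the mean term, produces the constant \(1+2V(C)/\pi\).

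The main obstacle is the precise bookkeeping in this integration by parts. We must justify the passage to the boundary for a merely rectifiable curve of bounded rotation, and we must obtain exactly the constant \(1+2V(C)/\pi\) rather than a cruder one. I would first prove the estimate for analytic \(C\), which is the only case used here by Lemma~\ref{lem:keyhole-geometry}, where everything is smooth. The general case would then follow by approximating with level curves \(\Gamma_R\), since the rotation of \(\Gamma_R\) is bounded by \(V(C)\) as \(R\downarrow 1\), as in Pommerenke's treatment.
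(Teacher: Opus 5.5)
The paper does not prove this theorem; it simply cites Gaier. Your outline follows the classical K\"ovari--Pommerenke/Gaier route, and that framework is right. The step meant to produce the constant, however, fails. The representation you are aiming at is exactly
\[
 T_KP(z)=\frac1\pi\int_0^{2\pi}P(e^{i\tau})\,d_\tau\theta(\tau,z)-P(0)
 \qquad(z\in G).
\]
From it the natural estimate is \(|T_KP(z)|\le(\pi^{-1}\operatorname{Var}_\tau\theta(\cdot,z)+1)\norm{P}_{L^\infty(\overline\D)}\). With your variation bound \(2\pi+2V(C)\), this gives \(3+2V(C)/\pi\), not the stated constant.

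Your ``subtraction of the winding, which pairs with the mean term'' amounts to claiming \(|T_KP(z)|\le(\pi^{-1}(\operatorname{Var}\theta-2\pi)+1)\norm{P}_{L^\infty(\overline\D)}\), and that is false. For convex \(C\) and \(z\in G\), \(\theta\) is monotone, so this would force \(\norm{T_KP}_{L^\infty(K)}\le\norm{P}_{L^\infty(\overline\D)}\). But take the ellipse with exterior map \(\Psi(u)=Ru+(Ru)^{-1}\). Writing \(z=W+W^{-1}\) with \(W=R\Phi(z)\), one has \(F_n(z)=R^{-n}(W^n+W^{-n})\), so \(F_n(\Psi(1))=1+R^{-2n}>1=\norm{w^n}_{L^\infty(\overline\D)}\).

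The missing ingredient is Radon's lemma in its correct form: \(\operatorname{Var}_\tau\theta(\tau,z)\le V(C)\) for \(z\notin C\), where \(V(C)\) is the total absolute rotation and already contains the \(2\pi\). It is an integral-geometric fact, not a pointwise one; indeed \(\theta\) can decrease along a straight piece where the tangent does not turn at all. Fix a direction \(\varphi\). Rolle's theorem, applied to the signed distance from the line through \(z\) in direction \(\varphi\), shows that this line meets \(C\) no more often than the tangent is parallel to \(\varphi\). Integrating over \(\varphi\) with Banach's indicatrix gives \(\operatorname{Var}\theta\le V(C)\). This yields the constant \(1+V(C)/\pi\), which implies the stated \(1+2V(C)/\pi\).

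You also leave the representation itself underived. The factor \(1/\pi\) and the correction \(-P(0)\) do not come from a Cauchy integral of \(P\). They come from the analyticity of \(h(w)=\log\bigl((\Psi(w)-z)/w\bigr)\) in \(|w|>1\), including \(\infty\), when \(z\) lies inside the curve. Since \(\int w^{-k}h'(w)\,dw=0\) for \(k\ge0\), conjugation gives
\[
 \int_0^{2\pi}e^{ik\tau}\,d\log|\Psi(e^{i\tau})-z|=i\int_0^{2\pi}e^{ik\tau}\,d(\theta-\tau)
 \qquad(k\ge1).
\]
Hence \(F_k(z)=\pi^{-1}\int_0^{2\pi}e^{ik\tau}\,d_\tau\theta(\tau,z)\) for \(k\ge1\). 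Together with \(F_0=1\) and \(\int d_\tau\theta=2\pi\), this gives the displayed formula.

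The F.\ and M.\ Riesz boundary passage can be avoided entirely. Run the same computation on \(|w|=R>1\) using the level-curve contour formula. For \(z\in K\) this gives \(T_KP(z)=\pi^{-1}\int_0^{2\pi}P(Re^{i\tau})\,d_\tau\theta_R(\tau,z)-P(0)\). Apply Radon's lemma to the analytic curve \(\Gamma_R\), whose rotation is at most \(V(C)\), and let \(R\downarrow1\).
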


This is Gaier's bounded-rotation theorem
\cite[Theorem~2, pp.~48--49]{Gaier}.  The boundary provided by
Lemma~\ref{lem:keyhole-geometry} satisfies each hypothesis: it is the image
of a regular real-analytic embedding and is therefore a rectifiable Jordan
curve; its nonvanishing derivative gives a continuous tangent direction;
and \eqref{eq:uniform-turning} gives a bounded-variation lift with
\(V(C_\delta)\leq V_0\).  Its bounded component is a nonempty Jordan domain,
and its exterior is simply connected by the Jordan curve theorem.  Hence
Theorem~\ref{thm:gaier} applies with the uniform constant
\begin{equation}
 D_0=1+\frac{2V_0}{\pi}.                              \label{eq:uniform-gaier-constant}
\end{equation}

\begin{lemma}[Uniform Faber separator]\label{lem:separator}
There are absolute constants \(c_0,C_0,C_1>0\) such that, for every
\(0<\delta<c_0\), the analytic keyhole \(K_\delta\) admits an integer \(N\)
and a polynomial
\[
 P_\delta(z)=\sum_{k=1}^{N}a_kz^k
\]
for which
\begin{align}
 P_\delta(0)&=0,                                      \label{eq:Pzero}\\
 \operatorname{Re}P_\delta(z)&\geq1
 \qquad(z\in K_\delta),                              \label{eq:Ppositive}\\
 \delta^{-2}\leq N&\leq\exp(C_0/\delta),             \label{eq:Nrange}\\
 \max_{|z|\leq2}|P_\delta(z)|
 &\leq\exp(C_1N).                                     \label{eq:Pgrowth}
\end{align}
\end{lemma}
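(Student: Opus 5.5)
We construct \(P_\delta\) by applying the Faber operator \(T_{K_\delta}\) to an explicit polynomial in \(w\), and use Theorem~\ref{thm:gaier} with the uniform constant \(D_0\) from \eqref{eq:uniform-gaier-constant} to control its size on \(K_\delta\). Let \(g(z)=\log|\Phi(z)|\) be the exterior Green function of \(\Omega_\delta\) with pole at infinity. Since \(K_\delta\subset\overline{\D}\), we have \(g(2)\geq\log 2\). The Harnack chain \eqref{eq:explicit-harnack}, applied to the positive harmonic function \(g\) on \(\Omega_\delta\setminus\{\infty\}\) (the chain avoids infinity), then gives
\[
 \rho:=|\Phi(0)|\geq1+\eta,
 \qquad
 \eta=g(0)\geq c\exp(-33\log3/\delta).
\]
Put \(w_0=\Phi(0)\). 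The idea is that the function \(w\mapsto -\frac{w_0}{w-w_0}\) is analytic on \(\overline\D\) with modulus bounded by \(\rho/\eta\). We truncate its Taylor expansion at degree \(N\) and then transport it by Faber polynomials.

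Expand \(\frac{1}{w_0-w}=\sum_{m\geq0}w^m w_0^{-m-1}\). On \(\Omega\), the Faber generating function gives
\[
 \sum_{m\ge0}F_m(z)w^{-m-1}=\frac{\Psi'(w)}{\Psi(w)-z}.
\]
So \(T_K\) applied to the truncation is a partial sum of a series whose limit is singular at \(z=0\). This can be exploited more cleanly as follows. Let \(Q(w)=A\sum_{m=0}^{N}(w/w_0)^m\) with a constant \(A\) to be chosen, and set
\[
 S(z)=T_KQ(z)=A\sum_{m=0}^N F_m(z)w_0^{-m}.
\]
By \eqref{eq:gaier-complete}, \(\|S\|_{K}\leq D_0 A\,\rho/\eta\). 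Rather than control \(S(0)\) directly, I would aim for the separator property through a different normalization. Choose the target \(h(w)=1+\epsilon\,\frac{w_0+w}{w_0-w}\), which has positive real part on \(\overline\D\) and a controlled size there. Since \(T_K\) does not preserve real parts, the plan instead becomes: find a polynomial \(P\) with \(P(0)=0\) and \(\|P-M\|_{K}\leq 1\) for a large constant \(M\) (\(M=2\) suffices after shifting). Then \(\operatorname{Re}P\geq1\) on \(K_\delta\).

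Equivalently, we need a polynomial \(q\) with \(q(0)=1\) and \(\|q\|_{K}\leq 1/2\), and then set \(P=2(1-q)\). Such a \(q\) is obtained from the Faber partial sums of the Cauchy-type kernel: the function \(f(z)=\frac{\Phi(z)}{\Phi(z)-w_0}\cdot\)(something) is not polynomial, so instead use
\[
 q(z)=\frac{F_N(z)}{F_N(0)}.
\]
On \(K\), \(|F_N|\leq D_0\) by Gaier's theorem applied to \(P(w)=w^N\). At \(0\), which lies on the level curve \(|\Phi|=\rho\) outside \(K\), the expansion \(F_N=\Phi^N+E_N\) with \(E_N\) bounded by \(D_0\) (the standard consequence of bounded rotation) gives
\[
 |F_N(0)|\geq\rho^N-D_0\geq(1+\eta)^N-D_0.
\]
Choosing \(N\approx\eta^{-1}\log(4D_0+4)\) then makes \(\|q\|_K\leq1/2\). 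Enlarging \(N\) if needed guarantees \(N\geq\delta^{-2}\), and the choice of \(\eta\) gives \(N\leq\exp(C_0/\delta)\), which is \eqref{eq:Nrange}. The polynomial \(P_\delta=2(1-q)\) has no constant term, so \eqref{eq:Pzero} holds, and its degree is at most \(N\).

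\textbf{Growth bound.} For \eqref{eq:Pgrowth}, \(F_N\) on \(|z|\leq2\) is bounded via \eqref{eq:faber-level-contour} on a level curve \(\Gamma_R\) enclosing \(2\overline\D\). Since \(\operatorname{cap}K_\delta\) is comparable to \(1\) uniformly in \(\delta\) (because \(K_\delta\) is squeezed between \(K_\delta^-\) and the unit disk), a fixed \(R\) works. This gives \(|F_N|\leq C R^{N}\) there, and also \(|F_N(0)|\geq1\), so \(|P_\delta|\leq\exp(C_1N)\).

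\textbf{Main obstacle.} The delicate step is the lower bound \(|F_N(0)|\gtrsim\rho^N\), i.e.\ controlling \(F_N-\Phi^N\) outside \(K\). I would derive it from \eqref{eq:faber-level-contour}: write \(F_N(z)-\Phi(z)^N\) as the contour integral over \(|w|=1^+\), which is the Faber transform of a function uniformly bounded by Gaier's estimate, together with the maximum principle on \(\Omega\). Failing that, I would apply Gaier's theorem to \(w^N-\rho^N\cdot(\text{normalized kernel})\) to avoid evaluating \(\Phi\) except where it is defined.
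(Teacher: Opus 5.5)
Your proof is correct. It follows the paper for the separator itself, and it proves the growth bound by a different route.

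\textbf{Shared construction.} Your final argument is the paper's: $q=F_N/F_N(0)$, Gaier's bound $\norm{F_N}_{K_\delta}\le D_0$, a lower bound on $|F_N(0)|$ from $|\Phi(0)|^N=e^{Ng(0)}$ with $g(0)$ controlled by the Harnack chain, and $P_\delta=2(1-q)$. The earlier attempts with the truncated Cauchy kernel and the target $h$ are abandoned and play no role.

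\textbf{Your ``main obstacle''.} The paper settles this step by the maximum principle alone, with no contour integral. The function $E_N=\Phi^N-F_N$ is analytic in $\Omega_\delta$ and vanishes at $\infty$. On $C_\delta$ one has $|\Phi|=1$ and $|F_N|\le D_0$, so $|E_N|\le 1+D_0$ throughout $\Omega_\delta$. The bound is $1+D_0$, not $D_0$ as you wrote, but your choice $\rho^N\ge 4(1+D_0)$ absorbs this.

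\textbf{Growth bound \eqref{eq:Pgrowth}.} This is where you genuinely differ.
\begin{itemize}
\item The paper uses the fixed disk $D_*=D(-\tfrac12,\tfrac1{10})\subset K_\delta^-\subset K_\delta$. The maximum principle inside and outside $D_*$ gives $|Q(z)|\le 25^m\norm{Q}_{D_*}$ on $|z|\le2$ for every polynomial of degree at most $m$. Applied to the normalized Faber polynomial, which is already bounded by a constant on $K_\delta$, this needs no information about $F_N(0)$ or about $\Psi$.
\item Your route through a fixed level curve $\Gamma_R$ also works, but the claim that ``a fixed $R$ works'' needs an argument. Domain monotonicity with $D_*\subset K_\delta$ gives $g_{\Omega_\delta}(z)\le\log(10|z+\tfrac12|)$, so $\Gamma_{40}$ misses $\{|z|\le3\}$, and $2\overline\D$ lies in its bounded component.
\item With that in hand, you do not need the contour formula or a distortion bound on $\Psi'$. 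The $E_N$ estimate gives $|F_N|\le R^N+1+D_0$ on $\Gamma_R$, and the maximum principle carries this inside. Together with $|F_N(0)|\ge 3(1+D_0)$, this yields $|P_\delta|\le e^{C_1N}$ on $|z|\le 2$.
\end{itemize}

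\textbf{Choice of $N$.} You reach $N\ge\delta^{-2}$ by enlarging the Faber index itself, whereas the paper pads with zero coefficients. This is fine, since a larger index only increases $|F_N(0)|$.
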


\begin{proof}
Let \(\Omega_\delta=\widehat\C\setminus K_\delta\), and let
\[
 g(z)=g_{\Omega_\delta}(z,\infty)=\log|\Phi(z)|
\]
be its Green function with pole at infinity.  Since
\(K_\delta\subset\overline\D\), domain monotonicity gives
\[
 g(2)\geq\log2.
\]
Applying \eqref{eq:explicit-harnack} to \(g\) yields the explicit estimate
\begin{equation}
 g(0)
 \geq
 \frac{\log2}{3}
 \exp\left(-\frac{33\log3}{\delta}\right).          \label{eq:g0}
\end{equation}

Let \(F_m\) be the \(m\)-th Faber polynomial.  Theorem~\ref{thm:gaier} and
\eqref{eq:uniform-gaier-constant} give
\begin{equation}
 \norm{F_m}_{K_\delta}\leq D_0.                      \label{eq:Fm-bound}
\end{equation}
The function
\[
 E_m(z)=\Phi(z)^m-F_m(z)
\]
is analytic in \(\Omega_\delta\) and tends to \(0\) at infinity.  Since
\(C_\delta\) is a Jordan curve, Carath\'eodory's theorem extends \(\Phi\)
continuously to the boundary, where \(|\Phi|=1\); hence
\(|E_m|\leq1+D_0\) on \(C_\delta\).  The maximum principle in the exterior
domain, with the point at infinity adjoined, gives
\begin{equation}
 |E_m(z)|\leq1+D_0
 \qquad(z\in\Omega_\delta).                          \label{eq:Em-bound}
\end{equation}

Choose \(m\) to be the least positive integer such that
\[
 mg(0)\geq\log\bigl(16(1+D_0)\bigr).
\]
Equation \eqref{eq:g0} gives \(m\leq\exp(C/\delta)\), while
\eqref{eq:Em-bound} gives
\[
 |F_m(0)|
 \geq|\Phi(0)|^m-|E_m(0)|
 \geq15(1+D_0).
\]
Consequently
\[
 Q_\delta(z)=\frac{F_m(z)}{F_m(0)}
\]
satisfies
\[
 Q_\delta(0)=1,
 \qquad
 \norm{Q_\delta}_{K_\delta}\leq\frac1{15}.
\]
Putting
\[
 P_\delta(z)=2(1-Q_\delta(z))
\]
therefore gives \eqref{eq:Pzero} and, on \(K_\delta\),
\[
 \operatorname{Re}P_\delta
 \geq2(1-|Q_\delta|)
 \geq\frac{28}{15}>1.
\]

For all sufficiently small \(\delta\), the fixed disk
\[
 D_*=D\left(-\frac12,\frac1{10}\right)
\]
is contained in \(K_\delta^-\), and hence in \(K_\delta\).  If \(Q\) is a
polynomial of degree at most \(m\), the maximum principle, applied inside
and outside the unit circle to
\[
 w\longmapsto Q\left(-\frac12+\frac{w}{10}\right),
\]
gives, for every \(z\in\C\),
\begin{equation}
 |Q(z)|
 \leq
 \max\left\{1,10\left|z+\frac12\right|\right\}^m
 \norm{Q}_{D_*}.                                    \label{eq:correct-poly-growth}
\end{equation}
In particular,
\[
 |Q(z)|\leq25^m\norm{Q}_{D_*}
 \qquad(|z|\leq2).
\]
Applying this to \(Q_\delta\) proves \eqref{eq:Pgrowth} with \(m\) in place
of \(N\).  Finally set
\[
 N=\max\{m,\lceil\delta^{-2}\rceil\}.
\]
The polynomial still has degree at most \(N\), while
\(\delta^{-2}\leq\exp(C/\delta)\) for small \(\delta\); enlarging the
constants gives \eqref{eq:Nrange} and \eqref{eq:Pgrowth}.
\end{proof}

\section{Equal-weight analytic quantization}\label{sec:quantization}

We first isolate two elementary analytic facts, one concerning inverses close
to the identity, the other concerning the logarithmic branches that are
uniform when the evaluation point remains a prescribed distance from
\(\T\).

\begin{lemma}[Holomorphic fixed points in a strip]
\label{lem:strip-fixed-point}
Let \(a>0\), let \(0<q<1\), and let \(G\) be a \(2\pi\)-periodic holomorphic
function on \(\Strip{4a}\).  Suppose
\begin{equation}
 \sup_{\Strip{4a}}|G|\leq a,
 \qquad
 \sup_{\Strip{4a}}|G'|\leq q.                       \label{eq:fixed-point-hypotheses}
\end{equation}
Then, for every \(s\in\Strip{2a}\), the equation
\begin{equation}
 w+G(w)=s                                             \label{eq:inverse-equation}
\end{equation}
has a unique solution \(w=T(s)\) in the closed strip
\(|\operatorname{Im}w|\leq3a\).  The function \(T\) is holomorphic on
\(\Strip{2a}\), satisfies
\begin{equation}
 |T(s)-s|\leq a,
 \qquad
 T(s+2\pi)=T(s)+2\pi,                                \label{eq:fixed-point-properties}
\end{equation}
and is the inverse of \(w\mapsto w+G(w)\) wherever both sides are defined.
If additionally \(G(\mathbb R)\subset\mathbb R\) and
\(1+G'(x)>0\) on \(\mathbb R\), then \(T|_{\mathbb R}\) is the ordinary
increasing inverse.
\end{lemma}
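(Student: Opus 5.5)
The plan is to use the Banach fixed-point theorem. Fix \(s\in\Strip{2a}\) and consider the map \(\mathcal F_s(w)=s-G(w)\) on the closed strip \(S=\{|\operatorname{Im}w|\leq3a\}\). Since \(|G|\leq a\) on \(\Strip{4a}\), we have \(|\operatorname{Im}\mathcal F_s(w)|<2a+a=3a\), so \(\mathcal F_s(S)\subset S\). The strip \(\Strip{4a}\) is convex and \(|G'|\leq q\) there, so integrating along segments gives \(|G(w)-G(w')|\leq q|w-w'|\) for \(w,w'\in S\). Hence \(\mathcal F_s\) is a \(q\)-contraction of the complete metric space \(S\). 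It therefore has a unique fixed point \(T(s)\in S\), which is exactly the unique solution of \eqref{eq:inverse-equation} in \(S\). The bound \(|T(s)-s|=|G(T(s))|\leq a\) is immediate. Periodicity follows from uniqueness: \(T(s)+2\pi\) lies in \(S\) and solves the equation with right-hand side \(s+2\pi\), because \(G\) is \(2\pi\)-periodic.

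For holomorphy, I would write \(T\) as the locally uniform limit of the Picard iterates \(w_0(s)=s\), \(w_{k+1}(s)=s-G(w_k(s))\). Each iterate is holomorphic on \(\Strip{2a}\) and takes values in \(S\subset\Strip{4a}\), by induction. Convergence is uniform with the geometric rate \(q^k\cdot 2a/(1-q)\). Weierstrass's theorem then gives holomorphy of \(T\). Alternatively, \(1+G'(w)\neq0\) since \(|G'|\leq q<1\), so the implicit function theorem gives local holomorphy and uniqueness glues the pieces together; I regard the iterate argument as the cleaner one.

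For the inverse statement: if \(w\) lies in \(S\) (or in the domain where both sides make sense) and \(s=w+G(w)\in\Strip{2a}\), then uniqueness gives \(T(s)=w\). Conversely, \(T(s)+G(T(s))=s\) by construction. So \(T\) is a two-sided inverse of \(w\mapsto w+G(w)\) there. This is the only step where care is needed, since the phrase "wherever both sides are defined" must be read as \(w\in S\) with \(w+G(w)\in\Strip{2a}\).

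For the real case, assume \(G(\mathbb R)\subset\mathbb R\). For real \(s\), the map \(\mathcal F_s\) preserves \(\mathbb R\), which is closed in \(S\), so the fixed point is real. Thus \(T(\mathbb R)\subset\mathbb R\). The real map \(h(x)=x+G(x)\) has \(h'>0\), and \(h(x)\to\pm\infty\) as \(x\to\pm\infty\) because \(G\) is bounded. So \(h\) is an increasing bijection of \(\mathbb R\), and by uniqueness its inverse coincides with \(T|_{\mathbb R}\). This last part is routine; I expect no step here to be a genuine obstacle, and the main point to verify is the strict invariance of the strip \(S\).
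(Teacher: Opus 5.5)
Your proposal is correct and follows essentially the same route as the paper. It uses the Banach contraction $w\mapsto s-G(w)$ on the closed strip $|\operatorname{Im}w|\le 3a$, holomorphy of $T$ from the uniformly convergent Picard iterates via Weierstrass, and periodicity from uniqueness of the fixed point; the only cosmetic difference is in the real case, where you get surjectivity of $x\mapsto x+G(x)$ from boundedness of $G$ while the paper appeals to the periodic increment, and both arguments work.
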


\begin{proof}
For \(s\in\Strip{2a}\), define
\[
 \mathcal T_s(w)=s-G(w).
\]
If \(|\operatorname{Im}w|\leq3a\), then
\[
 |\operatorname{Im}\mathcal T_s(w)|
 \leq|\operatorname{Im}s|+|G(w)|<3a,
\]
so \(\mathcal T_s\) maps the complete closed strip
\(|\operatorname{Im}w|\leq3a\) into itself.  Since the strip is convex,
\eqref{eq:fixed-point-hypotheses} gives
\[
 |\mathcal T_s(w_1)-\mathcal T_s(w_2)|
 \leq q|w_1-w_2|.
\]
The Banach fixed-point theorem therefore supplies a unique fixed point
\(T(s)\), and its equation immediately gives
\(|T(s)-s|\leq a\).

To see the dependence on \(s\) without leaving any implicit regularity
behind, define the Picard iterates
\[
 T_0(s)=s,
 \qquad
 T_{j+1}(s)=s-G(T_j(s)).
\]
Every \(T_j\) is holomorphic on \(\Strip{2a}\), all its values lie in the
closed strip \(|\operatorname{Im}w|\leq3a\), and
\[
 |T_{j+1}(s)-T_j(s)|
 \leq aq^j.
\]
Thus \(T_j\) converges uniformly on compact subsets, indeed uniformly on the
whole strip, to \(T\); the Weierstrass theorem shows that \(T\) is
holomorphic.  Periodicity of \(G\), followed by uniqueness of the fixed
point, gives \(T(s+2\pi)=T(s)+2\pi\).  If \(G\) is real on the real axis and
\(1+G'>0\), then \(x\mapsto x+G(x)\) is strictly increasing and, by its
periodic increment, is a bijection of \(\mathbb R\); the fixed point is
therefore its usual inverse.
\end{proof}

\begin{lemma}[Uniform logarithmic branches]
\label{lem:uniform-logarithms}
Let \(0<\delta\leq1/4\).

\begin{enumerate}
\item
If \(|z|\leq1-\delta\), then
\begin{equation}
 L_z^-(w)
 :=
 -\sum_{k=1}^\infty\frac{z^ke^{-ikw}}{k}             \label{eq:inside-log-series}
\end{equation}
defines a holomorphic branch of \(\operatorname{Log}(1-ze^{-iw})\) on
\(|\operatorname{Im}w|<\delta/8\).

\item
If \(1+\delta\leq|z|\leq2\), then
\begin{equation}
 L_z^+(w)
 :=
 -\sum_{k=1}^\infty\frac{z^{-k}e^{ikw}}{k}           \label{eq:outside-log-series}
\end{equation}
defines a holomorphic branch of
\(\operatorname{Log}(1-z^{-1}e^{iw})\) on the same strip.

\item
In both cases,
\begin{equation}
 |L_z^\pm(w)|
 \leq2\log\frac2\delta
 \qquad
 \left(|\operatorname{Im}w|<\frac\delta8\right).    \label{eq:uniform-log-bound}
\end{equation}
\end{enumerate}
\end{lemma}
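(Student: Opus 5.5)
The plan is to bound \(|ze^{\mp iw}|\) on the strip, then sum the series directly.

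\emph{Part (1).} For \(|\operatorname{Im}w|<\delta/8\) we have \(|e^{-ikw}|=e^{k\operatorname{Im}w}\leq e^{k\delta/8}\). Put
\[
 \rho=|z|e^{\delta/8}\leq(1-\delta)e^{\delta/8}.
\]
Using \(1-\delta\leq e^{-\delta}\), we get \(\rho\leq e^{-7\delta/8}<1\). The series \eqref{eq:inside-log-series} is therefore dominated by \(\sum_k\rho^k/k\). It converges uniformly on the strip, and the Weierstrass theorem makes \(L_z^-\) holomorphic there. It is a branch of \(\operatorname{Log}(1-ze^{-iw})\) because \(\zeta=ze^{-iw}\) satisfies \(|\zeta|\leq\rho<1\), and on the unit disk \(-\sum\zeta^k/k\) is the principal logarithm of \(1-\zeta\). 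Equivalently, exponentiating the series gives \(1-ze^{-iw}\).

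\emph{Part (2).} The outside case is the same argument with \(\zeta=z^{-1}e^{iw}\). Here \(|\zeta|\leq e^{\delta/8}/(1+\delta)\). Since \(\log(1+\delta)\geq\delta-\delta^2/2\geq\tfrac78\delta\) for \(\delta\leq1/4\), we get \(\rho\leq e^{-3\delta/4}<1\). The upper restriction \(|z|\leq2\) is not needed for convergence.

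\emph{Part (3).} In both cases
\[
 |L_z^\pm(w)|\leq\sum_k\rho^k/k=-\log(1-\rho),
\]
with \(\rho\leq e^{-3\delta/4}\). The estimate reduces to showing that
\[
 1-e^{-3\delta/4}\geq\tfrac{3\delta}{4}e^{-3\delta/4}\geq\tfrac{3\delta}{4}e^{-3/16}\geq\delta/2
\]
(indeed \(\tfrac34e^{-3/16}\approx0.62\)). This gives \(-\log(1-\rho)\leq\log(2/\delta)\), which is stronger than \eqref{eq:uniform-log-bound} since \(\log(2/\delta)>0\).

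The only step needing care is this elementary calibration of \(\rho\) against \(\delta\) near \(\delta=1/4\); beyond that there is no real obstacle.
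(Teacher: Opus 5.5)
Your proof is correct, and parts (1) and (2) match the paper's argument: the same bounds \(|ze^{-iw}|\le e^{-7\delta/8}\) and \(|z^{-1}e^{iw}|\le e^{-3\delta/4}\), the same inequality \(\log(1+\delta)\ge\delta-\delta^2/2\ge 7\delta/8\), and the same identification with the principal power-series logarithm. For part (3) you bound the series termwise by \(-\log(1-\rho)\), whereas the paper splits \(\operatorname{Log}(1-\xi)\) into \(\log|1-\xi|\in[\log(\delta/2),\log 2]\) and \(\arg(1-\xi)\in(-\pi/2,\pi/2)\); both rest on the calibration \(1-e^{-3\delta/4}\ge\delta/2\), which you verify and the paper only asserts, and your route gives the sharper bound \(\log(2/\delta)\) without needing \(\pi/2\le\log(2/\delta)\).
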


\begin{proof}
Suppose first that \(|z|\leq1-\delta\).  If
\(|\operatorname{Im}w|<\delta/8\), then
\[
 |ze^{-iw}|
 \leq(1-\delta)e^{\delta/8}
 \leq e^{-7\delta/8}
 \leq1-\frac\delta2.
\]
The series \eqref{eq:inside-log-series} consequently converges normally and
defines the power-series branch of the logarithm.

If \(1+\delta\leq|z|\leq2\), then
\[
 |z^{-1}e^{iw}|
 \leq\frac{e^{\delta/8}}{1+\delta}.
\]
Since
\[
 \log(1+\delta)
 \geq\delta-\frac{\delta^2}{2}
 \geq\frac{7\delta}{8},
\]
we obtain
\[
 |z^{-1}e^{iw}|
 \leq e^{-3\delta/4}
 \leq1-\frac\delta2,
\]
and \eqref{eq:outside-log-series} also converges normally.

In either case let \(\xi=ze^{-iw}\) or \(\xi=z^{-1}e^{iw}\).  We have
\[
 |\xi|\leq1-\frac\delta2,
 \qquad
 \frac\delta2\leq|1-\xi|\leq2,
 \qquad
 \operatorname{Re}(1-\xi)>0.
\]
The chosen branch therefore has imaginary part between \(-\pi/2\) and
\(\pi/2\), and
\[
 |\operatorname{Log}(1-\xi)|
 \leq\left|\log|1-\xi|\right|+|\arg(1-\xi)|
 \leq\log\frac2\delta+\frac\pi2
 \leq2\log\frac2\delta.
\]
This proves \eqref{eq:uniform-log-bound}.
\end{proof}

\begin{lemma}[Analytic midpoint quantization]
\label{lem:quantization}
There are absolute constants \(c,C,c_*>0\) such that the following holds.
Let
\[
 \sigma(\theta)=\sum_{|k|\leq N}\widehat\sigma(k)e^{ik\theta}
\]
be a real trigonometric polynomial of degree at most \(N\), with
\(\widehat\sigma(0)=0\), and put
\[
 B=\max\{1,\norm{\sigma}_{L^\infty(\mathbb R)}\}.
\]
For a real number \(\alpha\), define
\[
 \rho(\theta)=1+\alpha\sigma(\theta),
 \qquad
 F(\theta)=\int_0^\theta\rho(t)\,dt.
\]
Let \(0<\delta\leq1/4\), and suppose
\begin{equation}
 |\alpha|B
 \leq c_*\min\{N^{-1},\delta\}.                     \label{eq:alpha-small}
\end{equation}
Then \(\rho>0\) on \(\mathbb R\), and the periodic lift of \(F^{-1}\)
extends holomorphically to a strip of width
\[
 c\min\{N^{-1},\delta\}.
\]

Let
\[
 \theta_j=F^{-1}\left(\frac{2\pi(j-\frac12)}{n}\right),
 \qquad
 q_n(z)=\prod_{j=1}^n(z-e^{i\theta_j}),
\]
and
\[
 V(z)=\frac1{2\pi}\int_0^{2\pi}
 \log|z-e^{i\theta}|\rho(\theta)\,d\theta.
\]
If \(n\min\{N^{-1},\delta\}\geq1\), then
\begin{equation}
 \sup_{\substack{|z|\leq2\\ \dist(z,\T)\geq\delta}}
 \left|
 \frac1n\log|q_n(z)|-V(z)
 \right|
 \leq
 C\log\frac2\delta
 \exp\left(-cn\min\{N^{-1},\delta\}\right).        \label{eq:quantization-bound}
\end{equation}
\end{lemma}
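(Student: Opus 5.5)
The plan has three stages: invert \(F\) holomorphically via Lemma~\ref{lem:strip-fixed-point}, rewrite \(\frac1n\log|q_n|-V\) as a periodic midpoint-rule error, and bound that error by the classical exponential estimate for analytic periodic integrands.

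\textbf{Step 1: the inverse map.} Write \(F(\theta)=\theta+G(\theta)\) with \(G(\theta)=\alpha\int_0^\theta\sigma\). Since \(\widehat\sigma(0)=0\),
\[
 G(\theta)=\alpha\sum_{0<|k|\le N}\frac{\widehat\sigma(k)}{ik}\bigl(e^{ik\theta}-1\bigr),
\]
which is a \(2\pi\)-periodic entire function. Put \(a=c\min\{N^{-1},\delta\}\). On \(\Strip{4a}\) we have \(|e^{ik\theta}|\le e^{4Na}\le e^{4c}\), so Bernstein-type control of a trigonometric polynomial off the real axis gives \(|\sigma(\theta)|\le C'B\) there. (If one wants to avoid Bernstein, bound \(\sum|\widehat\sigma(k)|\) by \(\sqrt{2N+1}\,\norm{\sigma}_{L^2}\).) The better route is to write \(\sigma(x+iy)\) as a Taylor series in \(iy\) and use Bernstein's inequality \(\norm{\sigma^{(j)}}_\infty\le N^j\norm{\sigma}_\infty\); this yields \(|\sigma(x+iy)|\le e^{N|y|}B\).

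Consequently \(|G'|=|\alpha\sigma|\le e^{4c}|\alpha|B\le 1/2\), and also \(|G(\theta)|\le|G(\operatorname{Re}\theta)|+4a\sup|G'|\). The real part is controlled by \(|G(x)|\le 2\pi|\alpha|B\) (using periodicity, reduce \(x\) to \([0,2\pi]\)). Choosing \(c_*\) small relative to \(c\) gives \(\sup|G|\le a\). The real-axis bound \(|\alpha\sigma|<1\) shows \(\rho>0\). Lemma~\ref{lem:strip-fixed-point} then gives a holomorphic lift \(T=F^{-1}\) on \(\Strip{2a}\), with \(T(s+2\pi)=T(s)+2\pi\) and \(|T(s)-s|\le a\).

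\textbf{Step 2: reduction to the midpoint rule.} Substitute \(\theta=T(s)\) in \(V\). This gives \(V(z)=\frac1{2\pi}\int_0^{2\pi}\log|z-e^{iT(s)}|\,ds\), while \(\frac1n\log|q_n(z)|\) is exactly the \(n\)-point midpoint rule applied to the same integrand. Next handle the logarithm. For \(|z|\le1-\delta\) write \(\log|z-e^{i\theta}|=\operatorname{Re}L_z^-(\theta)\), since \(|e^{i\theta}|=1\). For \(1+\delta\le|z|\le2\) write it as \(\log|z|+\operatorname{Re}L_z^+(\theta)\); the constant \(\log|z|\) is integrated exactly by both rules. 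So it suffices to treat \(h(s)=L_z^\pm(T(s))\). For \(|\operatorname{Im}s|<a\) with \(a\le\delta/16\), we have \(|\operatorname{Im}T(s)|\le2a<\delta/8\). Lemma~\ref{lem:uniform-logarithms} therefore makes \(h\) holomorphic and \(2\pi\)-periodic on \(\Strip{a}\), with \(|h|\le2\log(2/\delta)\).

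\textbf{Step 3: the quadrature error.} Expand \(h\) in Fourier series. The midpoint rule kills every mode except \(k\in n\mathbb Z\), and on those it reproduces \((-1)^{k/n}\widehat h(k)\). Shifting the contour gives \(|\widehat h(k)|\le\sup|h|\,e^{-|k|a'}\) for any \(a'<a\). Summing over \(k=\pm n,\pm2n,\dots\) gives the error bound \(4\log(2/\delta)\,e^{-na'}/(1-e^{-na'})\). The hypothesis \(n\min\{N^{-1},\delta\}\ge1\) keeps \(na'\) bounded below, so the denominator is controlled and \eqref{eq:quantization-bound} follows after taking the real part.

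\textbf{Main obstacle.} The delicate step is Step 1: obtaining \(\sup|G|\le a\) on a strip of width proportional to \(\min\{N^{-1},\delta\}\), uniformly in \(N\). This rests on the Bernstein-type bound \(e^{N|y|}B\) for the complex extension of \(\sigma\). I would verify that bound first.
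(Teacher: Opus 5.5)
Your proposal is correct and follows essentially the same route as the paper. You invert \(F\) with Lemma~\ref{lem:strip-fixed-point} on a strip of width comparable to \(\min\{N^{-1},\delta\}\), pull the logarithmic kernel back through \(T=F^{-1}\) using Lemma~\ref{lem:uniform-logarithms}, and bound the midpoint-rule aliasing sum \(\sum_{\ell\neq0}(-1)^\ell\widehat h(\ell n)\) by exponential Fourier decay, exactly as the paper does.

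The one difference is cosmetic. You get \(|\sigma(x+iy)|\leq e^{N|y|}B\) from Bernstein's inequality and a Taylor expansion, while the paper applies the maximum principle to \(e^{iNw}\sigma(w)\), viewed as a polynomial in \(e^{iw}\). You are right to set aside your parenthetical \(\ell^1\)--\(L^2\) alternative, since it loses a factor \(\sqrt{N}\).
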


\begin{proof}
The complex extension of \(\sigma\) satisfies
\begin{equation}
 |\sigma(w)|
 \leq e^{N|\operatorname{Im}w|}B.                   \label{eq:trig-strip}
\end{equation}
Indeed, after multiplication by \(e^{iNw}\), the expression becomes an
ordinary polynomial in \(e^{iw}\), and the maximum principle may be applied
on either side of the unit circle.  Because \(\widehat\sigma(0)=0\), the
primitive
\[
 S(w)=\int_0^w\sigma(\zeta)\,d\zeta
\]
is \(2\pi\)-periodic.  Reducing the real part of \(w\) modulo \(2\pi\), and
integrating first along the real axis and then vertically, gives absolute
constants \(c_0,C_0>0\) for which
\begin{equation}
 |S(w)|\leq C_0B
 \qquad
 \left(|\operatorname{Im}w|\leq\frac{c_0}{N}\right).\label{eq:primitive-bound}
\end{equation}

Put \(\tau=\min\{N^{-1},\delta\}\).  Choose an absolute \(a_0>0\) so small
that
\[
 4a_0\tau\leq\frac{c_0}{N},
 \qquad
 2a_0\tau\leq\frac\delta8,
\]
and set
\[
 a=a_0\tau,
 \qquad
 G(w)=\alpha S(w).
\]
On \(\Strip{4a}\), equations \eqref{eq:trig-strip},
\eqref{eq:primitive-bound}, and \eqref{eq:alpha-small} give
\[
 |G(w)|\leq C_0c_*\tau,
 \qquad
 |G'(w)|=|\alpha\sigma(w)|
 \leq c_*\tau e^{4a_0}.
\]
Choosing \(c_*\) sufficiently small, we obtain
\begin{equation}
 \sup_{\Strip{4a}}|G|\leq a,
 \qquad
 \sup_{\Strip{4a}}|G'|\leq\frac14.                 \label{eq:G-fixed-point-bounds}
\end{equation}
Since \(F(w)=w+G(w)\), Lemma~\ref{lem:strip-fixed-point} gives a
holomorphic inverse \(T=F^{-1}\) on \(\Strip{2a}\), with
\begin{equation}
 |T(s)-s|\leq a,
 \qquad
 T(s+2\pi)=T(s)+2\pi.                                \label{eq:T-properties}
\end{equation}
On the real axis,
\[
 F'(\theta)=\rho(\theta)
 \geq1-|\alpha|B\geq\frac34,
\]
so \(F\) is strictly increasing and \(T\) is its ordinary inverse there.

We now estimate the transformed logarithmic kernel.  If
\(|z|\leq1-\delta\), put
\[
 H_z(s)=L_z^-(T(s));
\]
if \(1+\delta\leq|z|\leq2\), put
\[
 H_z(s)=L_z^+(T(s)).
\]
For \(|\operatorname{Im}s|<a\), \eqref{eq:T-properties} gives
\[
 |\operatorname{Im}T(s)|
 \leq|\operatorname{Im}s|+|T(s)-s|
 <2a\leq\frac\delta8.
\]
Lemma~\ref{lem:uniform-logarithms} therefore applies, and \(H_z\) is
\(2\pi\)-periodic and holomorphic on \(\Strip a\), with
\begin{equation}
 \sup_{\Strip a}|H_z|
 \leq2\log\frac2\delta.                             \label{eq:Hz-uniform-bound}
\end{equation}

Write
\[
 H_z(s)=\sum_{k\in\mathbb Z}c_k(z)e^{iks}.
\]
Moving the Fourier contour to \(\operatorname{Im}s=\pm a/2\) gives
\begin{equation}
 |c_k(z)|
 \leq C\log\frac2\delta\,e^{-c\tau|k|}.             \label{eq:coefficient-decay}
\end{equation}
At the midpoint nodes \(s_j=2\pi(j-\frac12)/n\), one has
\[
 \frac1n\sum_{j=1}^ne^{iks_j}
 =
 \begin{cases}
 (-1)^\ell,&k=\ell n,\\
 0,&n\nmid k.
 \end{cases}
\]
Thus
\begin{align}
 &\frac1n\sum_{j=1}^nH_z(s_j)
 -\frac1{2\pi}\int_0^{2\pi}H_z(s)\,ds \notag\\
 &\hspace{30mm}
 =\sum_{\ell\in\mathbb Z\setminus\{0\}}
 (-1)^\ell c_{\ell n}(z).                           \label{eq:aliasing}
\end{align}
By \eqref{eq:coefficient-decay} and \(n\tau\geq1\), its absolute value is
at most the right-hand side of \eqref{eq:quantization-bound}.

Finally, the substitution \(s=F(\theta)\) gives
\(ds=\rho(\theta)d\theta\), while \(T(s_j)=\theta_j\).  For \(|z|<1\),
taking real parts therefore gives precisely the difference in
\eqref{eq:quantization-bound}.  For \(|z|>1\), one writes
\[
 \log|z-e^{i\theta}|
 =\log|z|+\operatorname{Re}L_z^+(\theta);
\]
the constant \(\log|z|\) is reproduced exactly by the integral and the
discrete average.  This completes the proof.
\end{proof}

Return to the separator of Lemma~\ref{lem:separator}.  Write
\[
 P_\delta(z)=\sum_{k=1}^{N}a_kz^k,
 \qquad
 H_\delta=\operatorname{Re}P_\delta,
\]
and define
\begin{equation}
 \sigma_\delta(\theta)
 =-2\operatorname{Re}\sum_{k=1}^{N}ka_ke^{ik\theta}.\label{eq:sigma-def}
\end{equation}
This is a real trigonometric polynomial of mean zero and degree at most
\(N\).  The Fourier expansion of the logarithmic kernel gives the exact
identities
\begin{align}
 \frac1{2\pi}\int_0^{2\pi}
 \log|z-e^{i\theta}|\sigma_\delta(\theta)\,d\theta
 &=H_\delta(z),
 &&|z|<1,                                             \label{eq:potential-in}\\
 \frac1{2\pi}\int_0^{2\pi}
 \log|z-e^{i\theta}|\sigma_\delta(\theta)\,d\theta
 &=H_\delta(1/\overline z),
 &&|z|>1.                                             \label{eq:potential-out}
\end{align}
Moreover, Cauchy's coefficient estimate on \(|z|=2\), together with
\eqref{eq:Pgrowth}, gives
\begin{equation}
 \norm{\sigma_\delta}_{L^\infty(\mathbb R)}
 +\max_{|z|\leq1}|P_\delta(z)|
 \leq e^{C_2N}.                                      \label{eq:B-growth}
\end{equation}

\section{The boundary competitor and the sharp order}\label{sec:competitor}

\begin{proposition}[Boundary-zero competitor]\label{prop:competitor}
There is an absolute constant \(C>0\) such that, for every sufficiently
large \(n\), one can find \(q_n\in\Pn_n(\T)\) satisfying
\begin{equation}
 \Area\Lambda_{q_n}(1)
 \leq\frac{C}{\log n}.                               \label{eq:competitor-area}
\end{equation}
\end{proposition}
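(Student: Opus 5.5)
We take \(\delta=A/\log n\), with the absolute constant \(A\) chosen below, and apply Lemma~\ref{lem:separator} to obtain \(P_\delta\), \(N\) and \(\sigma_\delta\) as in \eqref{eq:sigma-def}. We then set \(\rho=1+\alpha\sigma_\delta\) and let \(q_n\) be the midpoint-quantile product of Lemma~\ref{lem:quantization}. By construction \(q_n\) is monic of degree exactly \(n\) with all zeros on \(\T\), so \(q_n\in\Pn_n(\T)\).

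For the parameters, \eqref{eq:Nrange} gives \(N\leq\exp(C_0\log n/A)=n^{C_0/A}\). We choose \(A\geq 4C_0\), so that \(N\leq n^{1/4}\). By \eqref{eq:B-growth}, \(B\leq e^{C_2N}\), and we take \(\alpha=c_*\tau e^{-C_2N}\) with \(\tau=\min\{N^{-1},\delta\}=N^{-1}\). The last equality holds because \(N\geq\delta^{-2}\geq\delta^{-1}\). This choice satisfies \eqref{eq:alpha-small}. Moreover \(n\tau\geq n^{3/4}\geq1\).

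For \(\dist(z,\T)\geq\delta\) and \(|z|\leq2\), the identities \eqref{eq:potential-in} and \eqref{eq:potential-out}, together with \(\int\log|z-e^{i\theta}|\,d\theta/2\pi=\log^+|z|\), give the following.
\[
 V(z)=\alpha H_\delta(z)\quad(|z|\leq1-\delta),
 \qquad
 V(z)=\log|z|+\alpha H_\delta(1/\overline z)\quad(|z|\geq1+\delta).
\]
The quantization error in \eqref{eq:quantization-bound} is at most \(C\log(2/\delta)\exp(-cn^{3/4})\).

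The main obstacle is to check that this error is much smaller than the tiny margin \(\alpha\sim N^{-1}e^{-C_2N}\). Since \(N\leq n^{1/4}\), we have \(\alpha\geq\exp(-C'n^{1/4})\), while the error is \(\exp(-cn^{3/4}+O(\log\log n))\). So the error is \(o(\alpha)\) for large \(n\), and this is exactly why \(A\) must make \(N\) a small power of \(n\).

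On the inner region we use \eqref{eq:inner-margin}, which says \(K_\delta^-\subset K_\delta\), so \(H_\delta\geq1\) on \(K_\delta^-\). Hence for \(z\in K_\delta^-\), which also satisfies \(\dist(z,\T)\geq\delta\), we get \(\frac1n\log|q_n(z)|\geq\alpha-o(\alpha)>0\), that is, \(|q_n|>1\).

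On the outer region \(1+\delta\leq|z|\leq2\) we only know \(|H_\delta(1/\overline z)|\leq\max_{\overline\D}|P_\delta|\leq e^{C_2N}\). This gives \(\alpha|H_\delta|\leq c_*/N\). Meanwhile \(\log|z|\geq\log(1+\delta)\geq\delta/2\) and \(\delta\geq1/N\), since \(N\geq\delta^{-2}\). Taking \(c_*\leq1/4\) (shrinking \(c_*\) is allowed), we get \(V(z)\geq\delta/4\), which beats the error, so \(|q_n|>1\) there. For \(|z|\geq2\) we have \(|q_n(z)|\geq(|z|-1)^n\geq1\), with equality only on a null set.

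It follows that \(\Lambda_{q_n}(1)\) is contained, up to a null set, in the complement of \(K_\delta^-\) inside the annulus region. That is,
\[
 \Lambda_{q_n}(1)\subset\{1-6\delta<|z|<1+\delta\}\cup\{\dist(z,\Gamma_0)<2\delta\}.
\]
The first set has area \(O(\delta)\), and the second is a \(2\delta\)-neighbourhood of a unit segment, also of area \(O(\delta)\). Since \(\delta=A/\log n\), this proves \eqref{eq:competitor-area}. The lower-order constraints \(\delta<c_0\) and \(\delta\leq1/4\) hold for \(n\) large.
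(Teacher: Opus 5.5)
Your proposal is correct and follows the paper's proof essentially step for step: the same choice \(\delta=A/\log n\) forcing \(N\leq n^{1/4}\), the same perturbation of Haar measure by \(\alpha\sigma_\delta\) followed by midpoint quantization, the same comparison of the error \(\exp(-cn^{3/4})\) against the margin \(\alpha\), and the same \(O(\delta)\) area count for the corridor and annulus. The differences are cosmetic. You take \(\alpha=c_*N^{-1}e^{-C_2N}\) rather than the paper's \(\eta/(nB)\), and accordingly use \(N\geq\delta^{-2}\) to get \(\alpha B\leq\delta/4\) outside the disk. You also prove \(|q_n|>1\) directly on \(K_\delta^-\) instead of on \(K_\delta\), which is all the area estimate needs.
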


\begin{proof}
Choose a large absolute constant \(A\), and put
\begin{equation}
 \delta=\frac{A}{\log n}.                            \label{eq:delta-choice}
\end{equation}
Lemma~\ref{lem:separator} supplies \(P_\delta\) and a bandwidth \(N\) for
which, after increasing \(A\),
\begin{equation}
 \delta^{-2}
 \leq N
 \leq e^{C_0/\delta}
 =n^{C_0/A}
 \leq n^{1/4}.                                       \label{eq:N-small}
\end{equation}
The imposed lower bound has the useful consequence
\begin{equation}
 \min\{N^{-1},\delta\}=N^{-1}.                       \label{eq:min-width}
\end{equation}

Set
\[
 B=
 \max\left\{
 1,
 \norm{\sigma_\delta}_{L^\infty(\mathbb R)},
 \max_{|z|\leq1}|P_\delta(z)|
 \right\}.
\]
By \eqref{eq:B-growth}, \(B\leq e^{C_2N}\).  Fix a sufficiently small
absolute number \(\eta>0\), and put
\begin{equation}
 \alpha=\frac{\eta}{nB},
 \qquad
 \rho=1+\alpha\sigma_\delta.                        \label{eq:alpha-choice}
\end{equation}
Since \(N=o(n)\), the condition \eqref{eq:alpha-small} follows from
\[
 \alpha B=\frac\eta n\leq\frac{c_*}{N}
\]
for all sufficiently large \(n\).  Let \(q_n\) be the degree-\(n\)
midpoint-quantile polynomial supplied by Lemma~\ref{lem:quantization}.

Its continuous logarithmic potential is
\[
 V(z)=\frac1{2\pi}\int_0^{2\pi}
 \log|z-e^{i\theta}|\rho(\theta)\,d\theta,
\]
and \eqref{eq:potential-in}--\eqref{eq:potential-out} give
\begin{align}
 V(z)&=\alpha H_\delta(z),
 &&|z|<1,                                             \label{eq:V-in}\\
 V(z)&=\log|z|+\alpha H_\delta(1/\overline z),
 &&|z|>1.                                             \label{eq:V-out}
\end{align}
From \eqref{eq:N-small}, \(B\leq e^{C_2N}\), and
\eqref{eq:min-width}, one has
\begin{equation}
 C\log\frac2\delta\,e^{-cn/N}
 \leq\frac\alpha4                                  \label{eq:error-small}
\end{equation}
for all sufficiently large \(n\).  Indeed,
\(\log(1/\alpha)\leq C_2N+\log n+O(1)\), whereas
\(n/N\geq n^{3/4}\).

Every \(z\in K_\delta\) satisfies \(\dist(z,\T)\geq3\delta\) and
\(H_\delta(z)\geq1\).  Lemma~\ref{lem:quantization}, used with separation
parameter \(\delta\), therefore gives
\[
 \frac1n\log|q_n(z)|
 \geq V(z)-\frac\alpha4
 \geq\frac{3\alpha}{4}>0,
\]
so
\begin{equation}
 |q_n|>1
 \qquad\text{on }K_\delta.                          \label{eq:positive-on-K}
\end{equation}
If \(1+\delta\leq|z|\leq2\), then \eqref{eq:V-out} and the definition of
\(B\) yield
\[
 V(z)
 \geq\log(1+\delta)-\alpha B
 \geq\frac\delta2-\frac\eta n
 \geq\frac\delta3.
\]
Together with \eqref{eq:error-small}, this gives
\begin{equation}
 |q_n|>1
 \qquad(1+\delta\leq|z|\leq2).                      \label{eq:positive-outside}
\end{equation}
Finally, if \(|z|\geq2\), then
\[
 |q_n(z)|
 =\prod_{j=1}^n|z-e^{i\theta_j}|
 \geq(|z|-1)^n
 \geq1.
\]

Since \(K_\delta^-\subset K_\delta\), the part of
\(\{|z|\leq1-6\delta\}\) not covered by \(K_\delta\) lies in the
\(2\delta\)-neighborhood of \([0,1]\), whose area is \(O(\delta)\).  The
annulus
\[
 1-6\delta<|z|<1+\delta
\]
has the same order of area.  Equations \eqref{eq:positive-on-K} and
\eqref{eq:positive-outside} consequently imply
\[
 \Area\Lambda_{q_n}(1)
 \leq C\delta
 \leq\frac{CA}{\log n},
\]
which proves \eqref{eq:competitor-area}.
\end{proof}

\begin{proof}[Proof of the sharp-order assertion in
Theorem~\ref{thm:sharp}]
The lower bound in \cite[Theorem~1]{KLR} gives
\[
 \kappa_n(\overline\D,1)
 \geq\frac{c}{\log n}
 \qquad(n\geq3)
\]
for an absolute constant \(c>0\).  Since
\(\Pn_n(\T)\subset\Pn_n(\overline\D)\),
\[
 \kappa_n(\overline\D,1)
 \leq\kappa_n(\T,1).
\]
Proposition~\ref{prop:competitor} gives the rightmost inequality in
\eqref{eq:sharp-chain} for all sufficiently large \(n\), and enlarging \(C\)
absorbs the finitely many remaining degrees.
\end{proof}

\section{Normality of the critical minimizers}\label{sec:normality}

For a nonzero harmonic function \(h\) on \(\D\), write
\[
 M_h(s)=\sup_{|z|\leq s}|h(z)|,
\]
and define
\[
 \beta(\D,h)=\log\frac{M_h(1)}{M_h(1/2)},
 \qquad
 \beta^*(\D,h)=\max\{\beta(\D,h),3\}.
\]
We use the following theorem in precisely this normalization.

\begin{theorem}[Nazarov--Polterovich--Sodin]\label{thm:NPS}
There is an absolute constant \(c_{\mathrm{NPS}}>0\) such that every nonzero
real-valued function \(h\), harmonic in a neighborhood of
\(\overline\D\) and satisfying \(h(0)=0\), obeys
\begin{equation}
 \Area\{z\in\D:h(z)>0\}
 \geq
 \frac{c_{\mathrm{NPS}}}{\log\beta^*(\D,h)}.         \label{eq:NPS}
\end{equation}
The same estimate holds with \(h<0\) in place of \(h>0\).
\end{theorem}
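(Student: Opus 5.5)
The plan is not to reprove the Nazarov--Polterovich--Sodin estimate from scratch. Instead we check that the form in Theorem~\ref{thm:NPS} follows from their published statement, and we record the mechanism behind it so that the normalization of \(\beta^*\) is seen to be harmless. First, compare normalizations. Their doubling exponent may be defined with different radii, for instance \(\log(M_h(1)/M_h(1/4))\) or with \(\sup|h|\) on \(\tfrac12\D\) versus \(\D\). Any two such choices agree up to absolute factors once the maximum is taken with \(3\). This follows from the Hadamard three-circles inequality for harmonic functions with \(h(0)=0\), applied to \(\log M_h\) as a convex function of \(\log s\) (up to the standard harmonic correction). The truncation at \(3\) ensures that \(\log\beta^*\geq\log3>0\), so multiplicative distortions of \(\beta\) become additive \(O(1)\) changes of \(\log\beta^*\). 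These are absorbed into \(c_{\mathrm{NPS}}\). The statement for \(\{h<0\}\) follows by applying the result to \(-h\), which has the same \(\beta\).

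For completeness, here is the mechanism I would follow if a self-contained argument were wanted.

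\textbf{Step 1: transfer mass to the positive part.} Since \(h(0)=0\), the mean-value property gives \(\int_{|z|=s}h^+=\int_{|z|=s}h^-=\tfrac12\int_{|z|=s}|h|\). A Poisson-kernel estimate then shows \(\sup_{|z|=s'}h^+\gtrsim M_h(s'')\) for suitable \(s''<s'\). In particular \(h^+\) is comparable to \(M_h\) at a fixed intermediate radius, up to factors controlled by \(\beta\).

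\textbf{Step 2: Carleman--Ahlfors decay.} Let \(a=\Area\{h>0\}\cap\D\) and let \(\theta(r)\) be the angular measure of \(\{h>0\}\cap\{|z|=r\}\). The function \(h^+\) is subharmonic and vanishes on the boundary of the positivity set. A Carleman-type differential inequality for \(\log\int_{|z|=r}(h^+)^2\), or Ahlfors' distortion theorem, gives growth
\[
\exp\Bigl(\pi\int_{1/2}^{1}\frac{dr}{r\,\theta(r)}\Bigr).
\]
By Cauchy--Schwarz, \(\int dr/\theta\geq(\int dr)^2/\int\theta\,dr\gtrsim 1/a\). Combining with Step~1 yields \(\beta\gtrsim\exp(c/a)\), which rearranges to \(a\geq c/\log\beta^*\).

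The main obstacle is Step~1 together with the passage from \(L^2\) circle averages to sup norms. The Carleman inequality controls averages of \(h^+\) only on circles where \(\theta(r)<2\pi\). One also has to exclude positivity components that touch the origin only tangentially. I would handle this by shrinking radii (working on \(|z|\le 1/2\) versus \(|z|\le 3/4\)) and invoking the three-circles comparison from the first paragraph to return to the radii \(1\) and \(1/2\) used in \(\beta\).
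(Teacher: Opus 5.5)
Your main argument is the paper's own: quote Theorem~2.2 of Nazarov--Polterovich--Sodin, and get the negative-set version by applying it to \(-h\). That function has \(M_{-h}=M_h\) and hence the same \(\beta^*\). As a citation this is complete. The normalization paragraph is not needed, because the paper quotes NPS in their own normalization \(\beta=\log(M_h(1)/M_h(1/2))\). Where a change of radius is genuinely required (Lemma~\ref{lem:fixed-scale}), note that \(\log M_h\) is not literally convex in \(\log s\) for harmonic \(h\). The paper instead passes to the holomorphic \(f\) with \(\operatorname{Re}f=h\), \(f(0)=0\), applies Hadamard to \(M_f\), and returns to \(h\) by Borel--Carath\'eodory. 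The loss is \(\beta\mapsto C(1+\beta)\), which \(\log\beta^*\) absorbs. Only the inner radius can be moved this way, so ``any two such choices agree'' is too strong.

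The self-contained ``mechanism,'' however, has a genuine gap: it does not yield the stated inequality. Cauchy--Schwarz gives \(\int_{1/2}^1dr/(r\theta(r))\gtrsim1/a\). Carleman's estimate, with your Step~1 and the passage from circle averages to suprema (both cost only absolute constants), then gives a growth \emph{ratio} \(M_h(1)/M_h(1/4)\gtrsim e^{c/a}\). But \(\beta\) is the logarithm of such a ratio, so what you actually prove is \(\beta(\D,h)\geq c/a-C\), that is, \(a\geq c/\beta^*\). The step ``\(\beta\gtrsim\exp(c/a)\)'' confuses \(\beta\) with \(e^{\beta}\). For small \(a\), the NPS inequality \(a\geq c/\log\beta^*\) is equivalent to \(e^{\beta}\geq\exp(e^{c/a})\). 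That is doubly exponential growth, which single-exponential Carleman growth across a thin positivity set cannot produce. The logarithmic order is also attained. The paper's separators \(-\operatorname{Re}P_\delta\) vanish at \(0\), are negative on \(K_\delta\), and have positive set of area \(O(\delta)\) in \(\D\). They also satisfy \(\beta\leq C_1N\leq e^{C/\delta}\), so \(a\lesssim\delta\lesssim1/\log\beta^*\). The difference matters in this paper. In the normality proof \(\beta_a(h_n)\lesssim\log n\), and the weak bound \(c/\beta^*\) would give only \(\Area\Lambda_{p_n}(1)\gtrsim1/\log n\). That does not contradict \(\kappa_n(\T,1)\leq C/\log n\). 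So either present the theorem purely as a citation, as the paper does, or supply the additional idea in NPS's proof that produces the logarithm. Carleman--Ahlfors plus Cauchy--Schwarz falls exponentially short.
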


The positive-set assertion is Theorem~2.2 of
Nazarov--Polterovich--Sodin \cite{NPS}; the negative-set assertion follows
by applying it to \(-h\).  We need the following fixed-scale consequence,
because the inner disk in the minimizer argument has radius \(r/r_2\), not
necessarily \(1/2\).

\begin{lemma}[Fixed-scale asymmetry]\label{lem:fixed-scale}
For every \(0<a<1\) there is \(c_a>0\) such that, if \(h\) is a nonzero
real-valued harmonic function in a neighborhood of \(\overline\D\),
\(h(0)=0\), and
\[
 \beta_a(h)=\log\frac{M_h(1)}{M_h(a)},
\]
then
\begin{equation}
 \Area\{z\in\D:h(z)<0\}
 \geq
 \frac{c_a}{1+\log(2+\beta_a(h))}.                  \label{eq:fixed-scale}
\end{equation}
\end{lemma}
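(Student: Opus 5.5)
The plan is to reduce Lemma~\ref{lem:fixed-scale} to Theorem~\ref{thm:NPS}. The idea is to compare \(\beta_a(h)\) with \(\beta(\D,h)\), the quantity defined at scale \(1/2\); the negative-set version of \eqref{eq:NPS} then supplies the area bound. Two regimes need separate treatment.

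\emph{Case \(a\leq1/2\).} Here \(M_h(a)\leq M_h(1/2)\), so \(\beta(\D,h)\leq\beta_a(h)\). Hence \(\log\beta^*(\D,h)\leq\log\max\{\beta_a(h),3\}\leq 1+\log(2+\beta_a(h))\), and \eqref{eq:NPS} applied with \(h<0\) gives \eqref{eq:fixed-scale} with \(c_a=c_{\mathrm{NPS}}\).

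\emph{Case \(a>1/2\).} Now \(M_h(1/2)\) may be much smaller than \(M_h(a)\), so a direct comparison is unavailable. Instead I would rescale. Put \(h_a(z)=h(az)\); it is harmonic near \(\overline\D\) with \(h_a(0)=0\). The goal is a growth bound for \(\beta(\D,h_a)=\log\bigl(M_h(a)/M_h(a/2)\bigr)\) in terms of \(\beta_a(h)\). To get it, use a three-circles (Hadamard) type convexity estimate for harmonic functions. Concretely, I would apply the Poisson representation together with the harmonic conjugate to obtain the following: for \(h\) harmonic in \(\D\), \(h(0)=0\), and radii \(s<t<1\), one has \(M_h(t)\leq C(s,t)\,M_h(s)^{\lambda}M_h(1)^{1-\lambda}\). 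This holds for harmonic functions as a two-constants/three-circles inequality, via \(\log|f|\) with \(f=h+i\tilde h\) holomorphic, because \(M_f\) is comparable to \(M_h\) on slightly smaller disks by the Borel--Carathéodory inequality. Taking \(s=a/2\) and \(t=a\), this yields \(\log\bigl(M_h(a)/M_h(a/2)\bigr)\leq C_a\bigl(1+\log(M_h(1)/M_h(a))\bigr)\), that is, \(\beta(\D,h_a)\leq C_a(1+\beta_a(h))\).

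Theorem~\ref{thm:NPS}, applied to \(-h_a\), then gives \(\Area\{h_a<0\}\geq c_{\mathrm{NPS}}/\log\beta^*(\D,h_a)\). The set \(\{z\in\D:h(z)<0\}\) contains \(a\{h_a<0\}\), so its area is at least \(a^2\) times that quantity. Finally, \(\log\beta^*\leq\log C_a+\log(2+\beta_a)+O(1)\), which is absorbed into \(c_a\).

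The main obstacle is the clean form of the harmonic three-circles inequality near the boundary. The Borel--Carathéodory step loses a factor of \(M_h\) on a larger radius, so I would run it with intermediate radii, for instance \(a<(1+a)/2<1\), and compare \(M_h\) at \((1+a)/2\) with \(M_h(1)\). The loss is then only a constant depending on \(a\) inside the logarithm, which is harmless. An alternative for this step is to apply Theorem~\ref{thm:NPS} directly on the disk of radius \(2a\wedge1\); this avoids any inequality in the regime where it reduces to monotonicity of \(M_h\).
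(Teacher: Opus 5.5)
Your proof is correct and follows essentially the paper's route. The case \(a\leq1/2\) is handled by monotonicity of \(M_h\); for \(a>1/2\) you compare doubling exponents by combining Borel--Carath\'eodory with Hadamard's three-circles theorem for \(f=h+i\tilde h\), and then invoke Theorem~\ref{thm:NPS} for the negative set. The only difference is cosmetic: you apply Theorem~\ref{thm:NPS} to the rescaled function \(h(az)\) and bound \(\log\bigl(M_h(a)/M_h(a/2)\bigr)\), paying a harmless factor \(a^2\) in area, whereas the paper applies it to \(h\) itself and bounds \(\beta(\D,h)\) by lower-bounding \(M_h(1/2)\) with the same three-circles argument. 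Your closing ``radius \(2a\wedge1\)'' alternative is unnecessary, and for \(a>1/2\) it simply returns to the unit disk without avoiding that argument.
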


\begin{proof}
It is enough to prove
\begin{equation}
 \beta(\D,h)
 \leq C_a(1+\beta_a(h)).                             \label{eq:beta-comparison}
\end{equation}
If \(a\leq1/2\), monotonicity of \(M_h\) gives
\(M_h(1/2)\geq M_h(a)\), and the claim is immediate.

Assume \(a>1/2\), and normalize \(M_h(a)=1\).  Let \(f\) be the holomorphic
function in \(\D\) determined by
\[
 \operatorname{Re}f=h,
 \qquad
 f(0)=0,
\]
and write \(M_f(s)=\max_{|z|\leq s}|f(z)|\).  Choose fixed radii
\[
 0<x<\frac12<a<b<c<1
\]
depending only on \(a\).  Since \(|\operatorname{Re}f|\leq|f|\), one has
\(M_f(a)\geq1\).  Borel--Carath\'eodory on
\(b\D\subset c\D\), together with \(f(0)=0\), gives
\begin{equation}
 M_f(b)
 \leq C_aM_h(c)
 \leq C_ae^{\beta_a(h)}.                             \label{eq:BC-outer}
\end{equation}
Hadamard's three-circles theorem, applied at \(x<a<b\), gives
\[
 M_f(a)
 \leq M_f(x)^\vartheta M_f(b)^{1-\vartheta},
 \qquad
 \vartheta=\frac{\log(b/a)}{\log(b/x)}\in(0,1).
\]
Using \(M_f(a)\geq1\) and \eqref{eq:BC-outer}, we obtain
\begin{equation}
 M_f(x)
 \geq\exp[-C_a(1+\beta_a(h))].                       \label{eq:Mfx-lower}
\end{equation}
A second Borel--Carath\'eodory estimate, now on
\(x\D\subset(1/2)\D\), gives
\[
 M_f(x)\leq C_aM_h(1/2).
\]
Together with \eqref{eq:Mfx-lower} and
\(M_h(1)=e^{\beta_a(h)}\), this proves \eqref{eq:beta-comparison}.

Theorem~\ref{thm:NPS}, applied to \(-h\), now yields
\[
 \Area\{h<0\}
 \geq\frac{c}{\log\max\{\beta(\D,h),3\}}
 \geq\frac{c_a}{1+\log(2+\beta_a(h))},
\]
as required.
\end{proof}

\begin{proof}[Proof of the normality assertion in
Theorem~\ref{thm:sharp}]
Let \(p_n\) minimize \(\Area\Lambda_p(1)\) over \(\Pn_n(\T)\), and put
\[
 u_n(z)=\log|p_n(z)|.
\]
Since the zeros lie on \(\T\), the function \(u_n\) is harmonic in \(\D\),
while \(u_n(0)=0\) because \(|p_n(0)|=1\).  Fix \(0<r<1\), and choose
\(r<r_2<1\).  For \(|z|\leq r_2\) and \(|\zeta|=1\),
\[
 1-r_2\leq|z-\zeta|\leq1+r_2,
\]
whence
\begin{equation}
 \norm{u_n}_{L^\infty(r_2\D)}
 \leq C_{r_2}n,
 \qquad
 C_{r_2}=\max\{-\log(1-r_2),\log(1+r_2)\}.           \label{eq:u-outer}
\end{equation}

Suppose that
\[
 \sup_{|z|\leq r}|p_n(z)|>e.
\]
Then
\begin{equation}
 \norm{u_n}_{L^\infty(r\D)}\geq1.                   \label{eq:u-inner}
\end{equation}
Rescale explicitly by
\begin{equation}
 h_n(z)=u_n(r_2z).                                    \label{eq:rescale}
\end{equation}
The function \(h_n\) is harmonic in \(|z|<1/r_2\), hence in a neighborhood
of \(\overline\D\), and \(h_n(0)=0\).  If \(a=r/r_2\), then the numerator
and denominator in the fixed-scale doubling exponent satisfy
\begin{align}
 M_{h_n}(1)
 &=\norm{u_n}_{L^\infty(r_2\D)}
 \leq C_{r_2}n,                                      \label{eq:h-numerator}\\
 M_{h_n}(a)
 &=\norm{u_n}_{L^\infty(r\D)}
 \geq1.                                              \label{eq:h-denominator}
\end{align}
Thus \(\beta_a(h_n)\leq\log(C_{r_2}n)\).  Lemma~\ref{lem:fixed-scale}
gives
\[
 \Area\{z\in\D:h_n(z)<0\}
 \geq\frac{c_r}{\log\log(n+e^e)}.
\]
Scaling back under \(z\mapsto r_2z\),
\begin{equation}
 \Area\bigl(\{u_n<0\}\cap r_2\D\bigr)
 \geq\frac{c_r}{\log\log(n+e^e)}.                  \label{eq:area-lower-growth}
\end{equation}
Since \(\{u_n<0\}=\Lambda_{p_n}(1)\), the sharp upper bound and minimality
give, on the other hand,
\begin{equation}
 \Area\Lambda_{p_n}(1)
 =\kappa_n(\T,1)
 \leq\frac{C}{\log n}.                              \label{eq:min-upper}
\end{equation}
For all sufficiently large \(n\), \eqref{eq:area-lower-growth} contradicts
\eqref{eq:min-upper}.  Hence \eqref{eq:minimizer-bound} holds.  Since
\(r<1\) was arbitrary, the minimizers are locally bounded, and Montel's
theorem gives normality.
\end{proof}

\begin{remark}
The normality conclusion is not an additional hypothesis hidden in the
sharp-order argument, but rather its consequence.  The essential new step is
the boundary-supported competitor of Proposition~\ref{prop:competitor}: a
perturbation of Haar measure of size \(O(1/n)\) carries the sign pattern of a
Faber separator, while analytic midpoint quantization preserves that pattern
with an error exponentially small in \(n/N\).  Once the upper bound
\(C/\log n\) has been obtained, any fixed interior growth of a minimizer
becomes too expensive in area, because the Nazarov--Polterovich--Sodin
estimate forces the larger scale \(1/\log\log n\).
\end{remark}

\end{document}